\newtheorem{theorem}{Theorem}[section]
\newtheorem{proposition}[theorem]{Proposition}
\theoremstyle{definition}
\theoremstyle{remark}
\newtheorem{remark}[theorem]{Remark}
\numberwithin{equation}{section}
\begin{document}

\def\bar{\overline}

\author[C. Cao]{Chongsheng Cao}
\address{Department of Mathematics \& Statistics \\Florida International University\\
Miami, Florida 33199, USA} \email{caoc@fiu.edu}

\author[Y. Guo]{Yanqiu Guo}
\address{Department of Mathematics \& Statistics \\Florida International University\\
Miami, Florida 33199, USA} \email{yanguo@fiu.edu}

\author[E. S. Titi]{Edriss S. Titi}
\address{Department of
Mathematics \\ Texas A\&M University\\
College Station, TX 77843, USA;
Department of Applied Mathematics and Theoretical Physics, University of Cambridge, Cambridge CB3 0WA, UK; 
\textbf{AND} Department of Computer Science and Applied Mathematics \\ Weizmann Institute of Science \\ Rehovot 7610001, Israel}
 \email{titi@math.tamu.edu,  edriss.titi@maths.cam.ac.uk}

\keywords{Rayleigh-B\'enard convection, rotational fluid, Boussinesq equations, global well-posedness}
\subjclass[2020]{35Q35, 35A01, 35A02}

\title[A rapidly rotating convection model without thermal diffusion]
{Analysis of a three-dimensional  rapidly rotating convection model without thermal diffusion}
\date{July 21, 2025}
\maketitle

\begin{abstract}
We study a three-dimensional rapidly rotating convection model featuring tall columnar structures, in the absence of thermal diffusion. We establish the global existence and uniqueness of weak solutions, as well as the Hadamard well-posedness of global strong solutions to this model. The lack of thermal diffusion introduces significant challenges in the analysis. To overcome these challenges, we first investigate the regularized model with thermal diffusion and establish delicate estimates that are independent of the thermal diffusion coefficient, and consequently justify the  vanishing diffusivity limit. This work serves as a continuation of our previous paper \cite{CGT-3}.
\end{abstract}

\section{Introduction}
The classical Boussinesq approximation, or Rayleigh–Bénard convection, consists of a system of equations coupling the three-dimensional Navier–Stokes equations with a heat advection–diffusion equation. By including the Coriolis force, one obtains the three-dimensional rotating Boussinesq equations:
\begin{align}
& \frac{\partial \vec u}{\partial t}  +  \frac{1}{Ro} \vec k \times \vec u +    \vec u \cdot \nabla \vec u +  \nabla p     
=    \frac{1}{Re}    \Delta \vec u  +  \Gamma    \theta    \vec k,    \label{bou1}    \\
& \frac{\partial \theta}{\partial t} +   \vec u \cdot \nabla \theta =  \frac{1}{Pe}    \Delta \theta,    \label{bou2}
\end{align}
subject to the incompressibility condition $\nabla \cdot \vec u = 0$, where $\vec u = (u, v, w)$ is the velocity vector field in three dimensions. The scalar functions $\theta$ and $p$ represent the temperature and pressure, respectively. Let $\vec k$ denote the unit vector in the vertical direction, aligned with the axis of rotation. The term $\frac{1}{Ro} \vec k \times \vec u$ represents the Coriolis force. The system involves several dimensionless parameters: the Rossby number $Ro$, the Reynolds number $Re$, the P\'eclet number $Pe$, and the buoyancy number $\Gamma$  (see, e.g., \cite {MajdaAtmosphereOcean}). Due primarily to the difficulties arising from the three-dimensional Navier–Stokes equations, the global regularity of the 3D Boussinesq system \eqref{bou1}–\eqref{bou2} remains an open problem.

It is well known that the presence of fast rotation in hydrodynamic models can prolong the lifespan of solutions.
This phenomenon arises because fast rotation introduces an averaging, dispersive-like, effect that help stabilize the flow.  For instance, Babin et al.~\cite{BMN97,BMN99} showed that, for the 3D rotating Euler equations, the lifespan of regular solutions extends to infinity as the rotation rate tends to infinity. Moreover, the 3D rotating Navier--Stokes equations admit global regular solutions if the rotation rate is sufficiently large, depending on the initial data. In a related result, Ghoul et al. \cite{Ghoul} showed that, in the space of analytic functions, the inviscid primitive equations for oceanic and atmospheric dynamics are locally well-posed, and the lifespan of the solution tends to infinity as the rotation rate increases to infinity. These findings motivate the study of Rayleigh--Bénard convection under the influence of fast rotation, as rapid rotation may extend the existence time of solutions.

Coherent structures in convection under moderate rotation are observed to be exclusively cyclonic. However, under rapid rotation, experiments have revealed a transition to approximately equal populations of cyclonic and anticyclonic structures. For a laboratory experimental study, see Vorobieff and Ecke~\cite{VE}. To numerically investigate these cyclonic and anticyclonic coherent structures in Rayleigh--Bénard convection under the influence of rapid rotation, Sprague et al.~\cite{SJKW} introduced a reduced system for rotationally constrained convection, valid in the asymptotic limit of thin columnar structures and strong rotation (see equations~(\ref{m-1})--(\ref{m-4}) below). The derivation of the model (\ref{m-1})–(\ref{m-4}) for rapidly rotating convection was also presented by Julien and Knobloch in \cite{JK}. In fact, system (\ref{m-1})–(\ref{m-4}) was derived from the Boussinesq equations (\ref{bou1})–(\ref{bou2}) using asymptotic theory. The dependent variables in the Boussinesq equations were expanded in terms of the small parameter, the Rossby number $Ro$. It was also assumed that $\frac{L}{H} \sim Ro$, where $\frac{L}{H}$ is the aspect ratio of the horizontal scale $L$ to the vertical scale $H$ in the original Boussinesq system (\ref{bou1})–(\ref{bou2}). In addition, a slow time $T \sim Ro^2\, t$ was introduced. By considering fast rotation and tall columnar structures, the limit $Ro\rightarrow 0$ was taken, leading to the following closed system derived in \cite{JK, SJKW}:
\begin{align}    
&\frac{\partial w}{\partial t}+ \mathbf u \cdot \nabla_h w + \frac{\partial \psi}{\partial z}= \frac{Ra}{Pr}\theta' + \Delta_h w,   \label{m-1} \\
&\frac{\partial \omega}{\partial t} + \mathbf u \cdot \nabla_h \omega - \frac{\partial w}{\partial z}= \Delta_h \omega,   \label{m-2}\\
&\frac{\partial \theta'}{\partial t}  + \mathbf u \cdot \nabla_h \theta' +  w \frac{\partial \bar{\theta}}{\partial z}= \frac{1}{Pr} \Delta_h \theta',   \label{m-3}\\
&\frac{\partial \bar{\theta}}{\partial T} + \frac{\partial}{\partial z} (\bar{\theta' w}) = \frac{1}{Pr} \frac{\partial^2 \bar \theta}{\partial z^2},      \label{m-4} 
\end{align}
subject to the horizontal incompressibility condition $\nabla_h \cdot \mathbf u=0$.
For simplicity of analysis, we consider system (\ref{m-1})–(\ref{m-4}) on a three-dimensional periodic domain $\Omega=[0,2\pi]^3$. 
We denote by $\mathbf u=(u,v)$ the horizontal component of the three dimensional velocity field $(u,v,w)$. Here, $\nabla_h=(\frac{\partial}{\partial x}, \frac{\partial}{\partial y})$ is the horizontal gradient, and $\Delta_h=\frac{\partial^2}{\partial x^2}+ \frac{\partial^2}{\partial y^2}$ is the horizontal Laplacian. The scalar function $\omega=  \nabla_h \times \mathbf u=\partial_x v - \partial_y u$ denotes the vertical component of the vorticity. Moreover, $\psi=\Delta_h ^{-1}\omega$ is the stream function for the horizontal flow, which is determined uniquely by assuming that its horizontal spatial average/mean is zero. The temperature fluctuation is given by
$\theta'=\theta - \bar{\theta}$, where 
$\bar{\theta}(z)$ is the horizontal mean temperature. Throughout the paper, the horizontal mean of any function $f$ is defined by
$\bar{f}(z) = \frac{1}{4 \pi^2}\int_{[0,2\pi]^2} f(x,y,z) dx dy$.  In the system above, $Ra$ is the Rayleigh number and $Pr$ is the Prandtl number.

Let us highlight some elements used in the derivation of model~(\ref{m-1})--(\ref{m-4}) from the papers~\cite{JK, SJKW}. Due to the rapid rotation, the pressure gradient force is balanced by the Coriolis effect---an approximate state known as \emph{geostrophic balance}. Furthermore, the Taylor--Proudman constraint suggests that rapidly rotating convection occurs in tall, columnar structures. We emphasize that the variable~$z$ in system~(\ref{m-1})--(\ref{m-4}) is associated with a large vertical scale, whereas~$x$ and~$y$ correspond to small horizontal scales. As a result, vertical viscosity is negligible, and the system~(\ref{m-1})--(\ref{m-4}) includes only horizontal viscosity.

The global well-posedness of system~(\ref{m-1})--(\ref{m-4}) remains an open problem. The main difficulty in analyzing~(\ref{m-1})--(\ref{m-4}) lies in the fact that the physical domain is three-dimensional, while the regularizing viscosity acts only on the horizontal variables. Moreover, the equations contain the terms $\frac{\partial \psi}{\partial z}$ and $\frac{\partial w}{\partial z}$, which involve vertical derivatives and pose challenges for establishing global regularity. In our previous work~\cite{CGT-2}, we introduced a weak dissipation term to regularize system~(\ref{m-1})--(\ref{m-4}), and established the global well-posedness of strong solutions for the regularized system. More precisely, in~\cite{CGT-2}, we added a very weak numerical dissipation term, $\frac{\partial^2 \psi}{\partial z^2}$, to equation~(\ref{m-2}), which stabilizes the fluid in the vertical direction and helps control the problematic terms $\frac{\partial \psi}{\partial z}$ and $\frac{\partial w}{\partial z}$. We remark that the Taylor--Proudman theory suggests that, in rapidly rotating fluids, the Coriolis force dominates and suppresses vertical variations in the flow, so the motion can be regarded as effectively $2 + \varepsilon$ dimensional. Therefore, it is of interest to study the global well-posedness of reduced models for rapidly rotating convection, such as system~(\ref{m-1})--(\ref{m-4}). Additionally, in plasma physics, the Hasegawa–Mima equation \cite{HM}, which models plasma turbulence in tokamaks, shares a similar structure with system~(\ref{m-1})--(\ref{m-4}). 
In \cite{CGT-1}, we studied global strong solutions for the 3D Hasegawa-Mima model with partial dissipation. Moreover, the global well-posedness of an inviscid 3D pseudo-Hasegawa–Mima model was established by Cao et al. \cite{CFT}. However, the global well-posedness of the original 3D  Hasegawa–Mima system remains an open problem.

In a previous paper \cite{CGT-3}, we studied the fast rotating convection system (\ref{m-1})–(\ref{m-4}) in the infinite Prandtl number limit (see system (\ref{inP-1})–(\ref{inP-4}) below), and established the global well-posedness of both weak and strong solutions. The current manuscript continues this line of investigation. At large Prandtl numbers, viscosity dominates thermal diffusivity. Motivated by this, we examine whether the system remains globally well-posed when the thermal diffusion term is entirely removed after taking the infinite Prandtl number limit. This system is challenging, as it eliminates the regularizing effect provided by thermal diffusion. In this work, we resolve this issue by proving the global well-posedness of system (\ref{inPn-1})–(\ref{inPn-4}) presented below.

\vspace{0.1 in}

\section{Main results}

In this section, we present our main results. Before doing so, we introduce the model equations (\ref{inPn-1})–(\ref{inPn-4}) that will be the focus of this manuscript. The main results include the global existence and uniqueness of weak solutions, the global well-posedness of strong solutions, and the justification of the vanishing diffusivity limit.

\subsection{The Model}

We are interested in the behavior of system (\ref{m-1})--(\ref{m-4}) in the regime where viscosity dominates thermal diffusion. To explore this, we consider the formal limit as the Prandtl number \( Pr \to \infty \), since the Prandtl number represents the ratio of kinematic viscosity to thermal diffusivity. In this regime, the appropriate time scale is the horizontal thermal diffusion time. Following \cite{JK, SJKW}, we apply the scaling transformation:
\[
t \rightarrow Pr\, t, \quad \mathbf{u} \rightarrow \frac{1}{Pr} \mathbf{u}, \quad w \rightarrow \frac{1}{Pr} w,
\]
to system (\ref{m-1})--(\ref{m-4}).  This transformation stretches the time scale to match the slower thermal diffusion. For analytical simplicity, we omit the slow time variable \( T \) by removing the term \( \bar{\theta}_T \). Taking the limit \( Pr \to \infty \), in the re-scaled equations yields the following formal limit system:
\begin{align}    
&\psi_z = Ra\,\theta' + \Delta_h w,   \label{inP-1} \\\
& - w_z = \Delta_h \omega,   \label{inP-2} \\\
&\theta'_t + \mathbf{u} \cdot \nabla_h \theta' + w \bar{\theta}_z = \Delta_h \theta',   \label{inP-3} \\\
&(\bar{\theta' w})_z = \bar{\theta}_{zz},     \label{inP-4}
\end{align}
with \( \nabla_h \cdot \mathbf{u} = 0 \). In system (\ref{inP-1})--(\ref{inP-4}), the velocity field adjusts instantaneously to thermal fluctuations. In our previous work \cite{CGT-3}, we established the global well-posedness of both weak and strong solutions to  system (\ref{inP-1})--(\ref{inP-4}) using the Galerkin method and energy estimates. A key step in our analysis was using the explicit solutions of the linear equations (\ref{inP-1})–(\ref{inP-2}), which allowed us to transfer the smoothing effect from the horizontal direction to the vertical direction.

In this paper, we study a more challenging problem inspired by system (\ref{inP-1})--(\ref{inP-4}). 
Since we are interested in the regime where viscosity dominates thermal diffusivity, 
a natural question arises: \emph{If the thermal diffusion term \( \Delta_h \theta' \) is removed from equation (\ref{inP-3}), 
does the system remain globally well-posed}? More precisely, we investigate the system:
\begin{align}    
& \psi_z = \theta' + \Delta_h w,   \label{inPn-1} \\\
& - w_z = \Delta_h \omega,   \label{inPn-2} \\\
& \theta'_t + \mathbf{u} \cdot \nabla_h \theta' + w \bar{\theta}_z = 0,   \label{inPn-3} \\\
& (\bar{\theta' w})_z = \bar{\theta}_{zz},     \label{inPn-4}
\end{align}
with \( \nabla_h \cdot \mathbf{u} = 0 \), on the periodic domain \( \Omega = [0, 2\pi]^3 \). 
For simplicity, we have set \( Ra = 1 \). We emphasize that the absence of the regularizing thermal diffusion term in system (\ref{inPn-1})--(\ref{inPn-4}) 
makes the global well-posedness problem significantly more difficult. 
Nevertheless, in this paper, we succeed in proving the global existence and uniqueness of weak solutions, as well as the Hadamard well-posedness of global strong solutions to system (\ref{inPn-1})--(\ref{inPn-4}). 
Moreover, we justify the vanishing diffusivity limit, demonstrating that model (\ref{inPn-1})--(\ref{inPn-4}) 
serves as a valid approximation of model (\ref{inP-1})--(\ref{inP-4}) when the thermal diffusivity is sufficiently small. In our analysis, we use the explicit solutions of the linear equations (\ref{inPn-1})–(\ref{inPn-2}) and apply the theory of $L^p$ Fourier multipliers. To establish the uniqueness of weak solutions, we employ a specially chosen low-regularity negative Sobolev space.

\vspace{0.1 in}

\subsection{Notations}

We consider the three-dimensional periodic domain \( \Omega = [0, 2\pi]^3 \).  
Let \( A = -\Delta_h \), where $\Delta_h=\frac{\partial^2}{\partial x^2}+ \frac{\partial^2}{\partial y^2}$ denotes the horizontal Laplacian acting on the appropriate space of functions with zero horizontal spatial mean. For \( s \geq 0 \), we define the space
\[
H^s_h(\Omega) = \left\{ f \in L^2(\Omega) : A^{s/2} f \in L^2(\Omega) \;\; \text{and} \;\; \bar{f} = 0 \right\},
\]
with the norm \( \|f\|_{H^s_h(\Omega)} = \|A^{s/2} f\|_{L^2(\Omega)} \).  

Here, the operator \( A^{s/2} \) is defined via Fourier series as
\[
A^{s/2} f = \sum_{\mathbf k = (k_1, k_2, k_3) \in \mathbb{Z}^3} (k_1^2 + k_2^2)^{s/2} \hat{f}_{\mathbf k} e^{i(k_1 x + k_2 y + k_3 z)},
\]
where \( \hat{f}_{\mathbf k} \) are the Fourier coefficients of \( f \), and  \( \bar{f}(z) = \frac{1}{4\pi^2} \int_{[0, 2\pi]^2} f(x, y, z) \, dx dy \) denotes the horizontal average of \( f \).

The dual space of \( H^s_h(\Omega) \) is denoted by \( (H^s_h(\Omega))' \).

We denote the \( L^p(\Omega) \) norm by \( \|\cdot\|_p = \|\cdot\|_{L^p(\Omega)} \).

We also define the space
\[
\dot{H}^1(0, 2\pi) = \left\{ g \in H^1(0, 2\pi) : \int_0^{2\pi} g(z) \, dz = 0 \right\},
\]
and denote its dual by \( H^{-1}(0, 2\pi) = (\dot{H}^1(0, 2\pi))' \).

\vspace{0.1 in}

\subsection{Statements of Main Results}

Our first result establishes the global existence and uniqueness of weak solutions to system (\ref{inPn-1})--(\ref{inPn-4}), assuming the initial data \( \theta'_0 \) belongs to the space \( L^6(\Omega) \).

\begin{theorem}[\textbf{Global existence and uniqueness of weak solutions}]  \label{wellp}
Assume the initial data \( \theta'_0 \in L^6(\Omega) \) satisfies \( \bar{\theta'_0} = 0 \), and let $T>0$ be arbitrarily large. 
Then system (\ref{inPn-1})--(\ref{inPn-4}) admits a unique global weak solution \( (\theta', \bar{\theta}, \mathbf{u}, w) \) enjoying the following regularity properties:
\[
\theta' \in  L^{\infty}(0,T; L^6(\Omega)) \cap C_w(0,T; L^2(\Omega)), 
\]
\[
\bar{\theta} \in L^2(0,T; \dot{H}^1(0,2\pi)),  \quad    \Delta_h \mathbf{u}, \Delta_h w \in L^{\infty}(0,T; L^6(\Omega)).
\]
Moreover, the equations hold in the following spaces:
\begin{align}
& \psi_z = \theta' + \Delta_h w, && \text{in } L^{\infty}(0,T; L^6(\Omega)), \label{iR-1} \\
& -w_z = \Delta_h \omega, && \text{in } L^{\infty}(0,T; (H^1_h(\Omega))'), \label{iR-2} \\
& \theta'_t + \mathbf{u} \cdot \nabla_h \theta' + w \bar{\theta}_z = 0, && \text{in } L^2(0,T; (H^1_h(\Omega))'), \label{iR-3} \\
& (\overline{\theta' w})_z = \bar{\theta}_{zz}, && \text{in } L^2(0,T; H^{-1}(0,2\pi)), \label{iR-4}
\end{align}
with \( \nabla_h \cdot \mathbf{u} = 0 \) and \( \theta'(0) = \theta'_0 \).
\end{theorem}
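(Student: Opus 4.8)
\section*{Proof proposal for Theorem \ref{wellp}}

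The plan is to reduce \eqref{inPn-1}--\eqref{inPn-4} to a single scalar evolution equation for $\theta'$, to solve a thermally diffusive regularization of it, to derive a priori bounds that do not see the diffusion coefficient, and then to let the diffusivity vanish; uniqueness is handled separately by an energy estimate in a negative-order space. The starting point is that \eqref{inPn-1}--\eqref{inPn-2} are linear and solvable mode by mode: for $\mathbf k=(k_1,k_2,k_3)$ with $k_h^2:=k_1^2+k_2^2\neq0$ one gets $\widehat{w}_{\mathbf k}=\tfrac{k_h^4}{k_h^6+k_3^2}\widehat{\theta'}_{\mathbf k}$, and likewise $\Delta_h w$, $\Delta_h\mathbf u$, $\psi_z$ are obtained from $\theta'$ via the symbols $-\tfrac{k_h^6}{k_h^6+k_3^2}$, $\pm\tfrac{k_h^3k_3}{k_h^6+k_3^2}$, $\tfrac{k_3^2}{k_h^6+k_3^2}$. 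Using the elementary bound $k_h^6+k_3^2\gtrsim k_h^{6(1-t)}|k_3|^{2t}$ for $0\le t\le1$, these symbols are $L^p$-Fourier multipliers, so $\|w\|_{p}+\|\Delta_h w\|_p+\|\Delta_h\mathbf u\|_p\lesssim_p\|\theta'\|_p$ for $1<p<\infty$; moreover the solution operator $\theta'\mapsto w$ (and $\theta'\mapsto\mathbf u$) gains two horizontal derivatives or, alternatively, $2/3$ of a vertical derivative (any pair $(a,b)$ with $a+3b\le2$). Integrating \eqref{inPn-4} in $z$ and using periodicity slaves the mean temperature to $\theta'$ through $\bar\theta_z=\overline{\theta'w}-\langle\overline{\theta'w}\rangle$, where $\langle\cdot\rangle$ is the full average. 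Hence \eqref{inPn-3} closes into $\theta'_t+\mathbf u[\theta']\cdot\nabla_h\theta'+w[\theta']\,\bar\theta_z[\theta']=0$, a transport-type equation whose forcing is cubic in $\theta'$ and which carries no regularization.

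For the construction I would add $\varepsilon\Delta_h\theta'$ to \eqref{inPn-3}. For fixed $\varepsilon>0$ this is, up to the values of the constants, the system treated in \cite{CGT-3}, which furnishes a global solution $(\theta'_\varepsilon,\bar\theta_\varepsilon,\mathbf u_\varepsilon,w_\varepsilon)$ obeying the same linear relations; the task is to bound it uniformly in $\varepsilon$. Testing the regularized temperature equation with $\theta'_\varepsilon$ and invoking \eqref{inPn-4}, the awkward nonlinear term turns out to be exactly coercive, $\int_\Omega w_\varepsilon\bar\theta_{\varepsilon,z}\,\theta'_\varepsilon\,dx=4\pi^2\|\bar\theta_{\varepsilon,z}\|_{L^2(0,2\pi)}^2$, which gives $\theta'_\varepsilon\in L^\infty_tL^2$ and $\bar\theta_{\varepsilon,z}\in L^2_tL^2(0,2\pi)$. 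Carrying out the same computation at a frozen height $z$ --- the horizontal transport term drops because $\nabla_h\cdot\mathbf u_\varepsilon=0$ --- gives $\tfrac12\partial_t\|\theta'_\varepsilon(\cdot,\cdot,z)\|_{L^2_{xy}}^2=-4\pi^2\bar\theta_{\varepsilon,z}(z)^2-4\pi^2C_\varepsilon(t)\,\bar\theta_{\varepsilon,z}(z)$ with $|C_\varepsilon|\lesssim\|\theta'_0\|_2^2$; completing the square bounds the right-hand side by $C\|\theta'_0\|_2^4$ pointwise in $z$, and taking the $L^6_z$-norm shows $\theta'_\varepsilon\in L^\infty_t\big(L^6_zL^2_{xy}\big)$ with a bound depending only on $\|\theta'_0\|_6$ and $T$.

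The decisive estimate is the $\varepsilon$-uniform $L^6$ bound. Testing with $|\theta'_\varepsilon|^4\theta'_\varepsilon$, the transport term vanishes and the $\varepsilon$-term has a good sign, leaving $\tfrac16\tfrac{d}{dt}\|\theta'_\varepsilon\|_6^6=-\int_\Omega w_\varepsilon\bar\theta_{\varepsilon,z}\,|\theta'_\varepsilon|^4\theta'_\varepsilon\,dx$. I would estimate this by writing $\bar\theta_{\varepsilon,z}=\overline{\theta'_\varepsilon w_\varepsilon}-C_\varepsilon$, by invoking the $L^p$-multiplier bounds and the vertical/horizontal smoothing of $\theta'\mapsto w$ (which yield, for instance, $\|w_\varepsilon\|_{L^\infty_zL^6_{xy}}\lesssim\|\theta'_\varepsilon\|_6$ and $\|w_\varepsilon\|_{L^\infty_zL^2_{xy}}\lesssim\|\theta'_\varepsilon\|_2$), and by feeding in the bound from the previous step to control $\overline{\theta'_\varepsilon w_\varepsilon}$ in the relevant $z$-Lebesgue norm, the aim being a Grönwall inequality with an $L^1_t$ coefficient. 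This yields $\theta'_\varepsilon\in L^\infty_tL^6$ and, through the linear relations, $\Delta_h w_\varepsilon,\Delta_h\mathbf u_\varepsilon\in L^\infty_tL^6$, all uniformly in $\varepsilon$. Finally, writing $\partial_t\theta'_\varepsilon=-\nabla_h\!\cdot(\mathbf u_\varepsilon\theta'_\varepsilon)-w_\varepsilon\bar\theta_{\varepsilon,z}+\varepsilon\Delta_h\theta'_\varepsilon$ and using these bounds shows $\partial_t\theta'_\varepsilon$ is bounded in $L^2_t(H^1_h(\Omega))'$ uniformly in $\varepsilon$.

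To pass to the limit I would exploit that $\theta'_\varepsilon$ need only converge weakly-$*$ in $L^\infty_tL^6$: the nonlinear terms are carried by $\mathbf u_\varepsilon$ and $w_\varepsilon$, and, thanks to their two-horizontal and (crucially) fractional-vertical smoothing, these are bounded in $L^\infty_t$ of an anisotropic Sobolev space that embeds compactly into $L^6(\Omega)$, while $\partial_t\mathbf u_\varepsilon$, $\partial_t w_\varepsilon$ inherit the $L^2_t(H^1_h)'$ bound of $\partial_t\theta'_\varepsilon$; Aubin--Lions then gives $\mathbf u_\varepsilon\to\mathbf u$, $w_\varepsilon\to w$ strongly in $L^2_tL^6(\Omega)$. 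One passes to the limit in $\nabla_h\!\cdot(\mathbf u_\varepsilon\theta'_\varepsilon)$ and in $w_\varepsilon\bar\theta_{\varepsilon,z}$ as products of a strongly and a weakly convergent sequence, and trivially in the linear identities, obtaining a weak solution with the stated regularity; $\theta'\in C_w(0,T;L^2)$ and $\theta'(0)=\theta'_0$ follow from $\theta'\in L^\infty_tL^2$, $\theta'_t\in L^2_t(H^1_h)'$ by the usual weak-continuity lemma. For uniqueness, given two such solutions I would test the equation for $\delta:=\theta'_1-\theta'_2$ against its representative in a carefully chosen negative-order (low regularity) Sobolev space; the differences of $\mathbf u,w,\bar\theta_z$ are once more multipliers of $\delta$, the term $\mathbf u_1\cdot\nabla_h\delta$ is handled by a commutator estimate using $\mathbf u_1\in L^\infty_tW^{2,6}_h$, and the dangerous term $\mathbf u[\delta]\cdot\nabla_h\theta'_2$ is integrated by parts onto $\theta'_2$ (legitimate since $\nabla_h\!\cdot\mathbf u[\delta]=0$) and then absorbed by balancing $\|\theta'_2\|_6$ against the strong smoothing of $\delta\mapsto\mathbf u[\delta]$ in the chosen norm. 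I expect the two real obstacles to be precisely the $\varepsilon$-uniform $L^6$ bound --- because the cubic term $w\bar\theta_z|\theta'|^4\theta'$ has no good sign and must be tamed solely through the anisotropic smoothing of the solution operators of \eqref{inPn-1}--\eqref{inPn-2} together with the $L^2$-type bounds above --- and the selection of the negative-order space for uniqueness, which must be weak enough to absorb the derivative loss in $\mathbf u[\delta]\cdot\nabla_h\theta'_2$ yet strong enough to close the transport commutator.
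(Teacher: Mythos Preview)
Your plan is correct and follows the same architecture as the paper --- regularize by $\varepsilon^2\Delta_h\theta'$, quote \cite{CGT-3}, get $\varepsilon$-free bounds culminating in $L^\infty_tL^6$, pass to the limit by Aubin--Lions on $(\mathbf u_\varepsilon,w_\varepsilon)$, and prove uniqueness by testing the difference in a negative horizontal Sobolev norm --- but the \emph{intermediate} step you use to reach the $L^6$ bound is genuinely different and worth flagging. The paper climbs a ladder $L^2\to L^3\to L^6$: it first tests with $|\theta'_\varepsilon|\theta'_\varepsilon$ and closes via the multiplier bound $\sup_z\|w_\varepsilon\|_{L^3_{xy}}\le C\|\theta'_\varepsilon\|_2$, then tests with $(\theta'_\varepsilon)^5$ and closes via $\sup_z\|w_\varepsilon\|_{L^6_{xy}}\le C\|\theta'_\varepsilon\|_3$, obtaining a double-exponential growth for $\|\theta'_\varepsilon(t)\|_6$. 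Your route instead freezes $z$, uses $\overline{w_\varepsilon\theta'_\varepsilon}=\bar\theta_{\varepsilon,z}+C_\varepsilon(t)$ to get $\partial_t\|\theta'_\varepsilon(\cdot,\cdot,z)\|_{L^2_{xy}}^2\le C\|\theta'_0\|_2^4$ pointwise in $z$, and hence a bound in $L^\infty_t(L^6_zL^2_{xy})$; this feeds $\|\bar\theta_{\varepsilon,z}\|_{L^6_z}\le C$ into the $L^6$ energy identity and closes the Gr\"onwall directly (using your multiplier bound $\|w_\varepsilon\|_{L^\infty_zL^6_{xy}}\lesssim\|\theta'_\varepsilon\|_6$, which is indeed valid by Proposition~\ref{Fmre}). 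Your argument is slicker and even yields a single-exponential bound on $\|\theta'_\varepsilon(t)\|_6$; the paper's detour through $L^3$ has the compensating advantage that the $L^3$ control is reused verbatim in the uniqueness proof and in the strong-solution estimates of Theorem~\ref{wellp2}.

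For uniqueness the paper makes the specific choice $(H^1_h(\Omega))'$, i.e.\ it pairs the difference equation with $A^{-1}\delta$; the transport term $\mathbf u_2\cdot\nabla_h\delta$ is handled not by a commutator estimate but by writing $\delta=-\Delta_hA^{-1}\delta$ and integrating by parts twice, which costs only $\|\nabla_h\mathbf u_2\|_\infty\lesssim\|\theta'_2\|_6$. Your description (``commutator estimate using $\mathbf u_1\in L^\infty_tW^{2,6}_h$'') would reach the same place; the paper's direct integration-by-parts is a bit cleaner here and avoids invoking any genuine commutator lemma.
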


\vspace{0.1 in}

To prove the existence of global weak solutions, we regularize the system by adding back a horizontal thermal diffusion term $\varepsilon^2 \Delta_h \theta'$ to equation (\ref{iR-3}), with  $\varepsilon > 0$ arbitrarily small to be eventually sent to zero. We then perform delicate energy estimates in the $L^2(\Omega)$, $L^3(\Omega)$, and $L^6(\Omega)$ norms to the regularized system, deriving uniform bounds that are independent of $\varepsilon$. The explicit linear relations between $\theta'$ and the velocity in equations (\ref{iR-1})--(\ref{iR-2}), along with the $L^p$ Fourier multiplier theory, play a crucial role in these estimates. The uniqueness of weak solutions is established using a norm in a dual space, following the approach of Yudovich \cite{Yu} and Larios et al. \cite{LLT}. However, due to the weak norm employed in the uniqueness proof, this argument cannot be directly adapted to show continuous dependence on the initial data.

To establish continuous dependence, we require more regular initial data. Theorem \ref{wellp2} below proves the global existence, uniqueness, and continuous dependence on initial data for strong solutions. These strong solutions will be essential in justifying the vanishing diffusivity limit in Theorem \ref{thm-conv}.

\begin{theorem}[\textbf{Hadamard well-posedness of global strong solutions}] \label{wellp2}
Assume the initial data $\theta'_0 \in L^6(\Omega)$ satisfies $\nabla_h \theta'_0 \in L^3(\Omega)$ and $\bar{\theta_0'} = 0$. Let $T>0$ be arbitrarily large.  
Then  system \eqref{inPn-1}--\eqref{inPn-4} admits a unique global strong solution $(\theta', \bar{\theta}, \mathbf{u}, w)$ satisfying the following regularity properties:
\[
\theta' \in  L^{\infty}(0,T; L^6(\Omega)) \cap  C(0,T; L^2(\Omega))   , \quad \nabla_h \theta' \in L^{\infty}(0,T; L^3(\Omega)),
\]
\[
\bar{\theta} \in L^{\infty}(0,T; \dot{H}^1(0,2\pi)), \quad (\mathbf{u}, w) \in L^{\infty}(0,T; W^{1,3}(\Omega)),
\]
and the equations hold in the following function spaces:
\begin{align}
& \psi_z = \theta' + \Delta_h w, && \text{in } L^{\infty}(0,T; H^1_h(\Omega)), \label{iRR-1} \\
& -w_z = \Delta_h \omega, && \text{in } L^{\infty}(0,T; L^3(\Omega)), \label{iRR-2} \\
& \theta'_t + \mathbf{u} \cdot \nabla_h \theta' + w \bar{\theta}_z = 0, && \text{in } L^2(\Omega \times (0,T)), \label{iRR-3} \\
& (\overline{\theta' w})_z = \bar{\theta}_{zz}, && \text{in } L^2(0,T; H^{-1}(0,2\pi)), \label{iRR-4}
\end{align}
with $\nabla_h \cdot \mathbf{u} = 0$ and $\theta'(0) = \theta'_0$. Moreover, the solution depends continuously on the initial data. Specifically, for a sequence of initial data $\theta'_{0,n} \to \theta'_0$ in $L^2(\Omega)$, where $\theta'_{0,n}, \theta'_0 \in L^6(\Omega)$, $\nabla_h \theta'_{0,n}, \nabla_h \theta'_0 \in L^3(\Omega)$, and $\bar{\theta'_{0,n}} = \bar{\theta'_0} = 0$, for $n=1,2,\dots$, the corresponding strong solutions satisfy:
\[
\theta'_n \to \theta' \text{ in } L^{\infty}(0,T; L^2(\Omega)), \quad \bar{\theta}_n \to \bar{\theta} \text{ in } L^{\infty}(0,T; \dot{H}^1(0,2\pi)),
\]
\[
(\mathbf{u}_n, w_n) \to (\mathbf{u}, w) \text{ in } L^{\infty}(0,T; H^2_h(\Omega)),
\]
with $\theta'_n(0) = \theta'_{0,n}$ and $\theta'(0) = \theta'_0$.
\end{theorem}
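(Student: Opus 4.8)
The plan is to reduce the coupled system to a single nonlocal transport equation for $\theta'$ and to run $L^p$ energy estimates on a thermal‑diffusion regularization that are uniform in the diffusivity; the two tools that make this work are the explicit Fourier solution of the linear subsystem and the $L^p$ Fourier multiplier calculus.

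\emph{Reduction.} Solving \eqref{inPn-1}--\eqref{inPn-2} in Fourier series, with $\kappa^2=k_1^2+k_2^2$ for a mode $\mathbf k=(k_1,k_2,k_3)$ (only $\kappa\ge1$ appears since $\bar{\theta'}=0$), one gets
\[
\widehat w_{\mathbf k}=\frac{\kappa^4}{\kappa^6+k_3^2}\,\widehat{\theta'}_{\mathbf k},\qquad \widehat\omega_{\mathbf k}=\frac{ik_3\kappa^2}{\kappa^6+k_3^2}\,\widehat{\theta'}_{\mathbf k},\qquad \widehat{\mathbf u}_{\mathbf k}=\frac{k_3\,(-k_2,k_1)}{\kappa^6+k_3^2}\,\widehat{\theta'}_{\mathbf k},
\]
using $\psi=\Delta_h^{-1}\omega$ and $\mathbf u=(-\partial_y\psi,\partial_x\psi)$. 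Thus $\mathbf u,w,\omega,\psi$ are Fourier multipliers of $\theta'$, and each symbol that arises (for $\mathbf u$, $w$, $\nabla\mathbf u$, $\nabla w$, $\Delta_h w$, $\Delta_h\omega$, the horizontal Hessian $\nabla_h^2\psi$, etc.) obeys the Marcinkiewicz condition, hence is bounded on $L^p(\Omega)$, $1<p<\infty$; this gives, e.g., $\|(\mathbf u,w)\|_{W^{1,3}(\Omega)}\lesssim\|\theta'\|_{L^3(\Omega)}+\|\nabla_h\theta'\|_{L^3(\Omega)}$ and $\|\Delta_h w\|_{L^6(\Omega)}\lesssim\|\theta'\|_{L^6(\Omega)}$. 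The decisive refinement is that the operators producing a quantity \emph{one horizontal derivative smoother} than $\theta'$---$\nabla_h\mathbf u$, $\nabla_h w$, $\nabla_h^2\psi$---carry, because of the $k_3^2$ in the elliptic denominators, an additional fractional vertical derivative of smoothing, and $L^6(\Omega)$ together with that much anisotropic smoothing embeds into $L^\infty(\Omega)$; hence
\[
\|\nabla_h\mathbf u\|_{L^\infty(\Omega)}+\|\nabla_h w\|_{L^\infty(\Omega)}+\|\nabla_h^2\psi\|_{L^\infty(\Omega)}\lesssim\|\theta'\|_{L^6(\Omega)},
\]
while the operators that are \emph{two horizontal derivatives smoother} map $L^2(\Omega)$ into $L^6(\Omega)$, i.e.\ $\|\mathbf u[f]\|_{L^6}+\|w[f]\|_{L^6}\lesssim\|f\|_{L^2}$. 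Finally \eqref{inPn-4} integrates to $\bar{\theta}_z=\overline{\theta' w}-\langle\overline{\theta' w}\rangle$ (vertical mean), so Jensen's inequality and $\|w\|_{L^6}\lesssim\|\theta'\|_{L^6}$ give $\|\bar{\theta}_z\|_{L^3(0,2\pi)}\lesssim\|\theta'\|_{L^6(\Omega)}^2$. The system then collapses to $\theta'_t+\mathbf u[\theta']\cdot\nabla_h\theta'=-w[\theta']\,\bar{\theta}_z[\theta']$: a transport equation with horizontally divergence‑free drift and a cubic nonlocal forcing.

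\emph{A priori estimates and existence.} These I would carry out on the regularization of \eqref{inPn-3} by $\varepsilon^2\Delta_h\theta'$ (the family used for Theorem~\ref{wellp}), with bounds independent of $\varepsilon\in(0,1]$. Since $\nabla_h\cdot\mathbf u=0$, the drift drops from every $\|\theta'\|_{L^p}$ estimate, so exactly as in Theorem~\ref{wellp} the $L^2$, $L^3$, $L^6$ estimates see only the forcing $w\bar{\theta}_z$ (controlled by $\|\theta'\|_{L^6}$) and give $\theta'\in L^\infty(0,T;L^6(\Omega))$ with a bound depending only on $\|\theta'_0\|_{L^6}$ and $T$; then $\|\bar{\theta}_z\|_{L^3}\lesssim\|\theta'\|_{L^6}^2$ yields $\bar{\theta}\in L^\infty(0,T;\dot H^1(0,2\pi))$, and the multiplier bounds give $(\mathbf u,w)\in L^\infty(0,T;W^{1,3})$ once $\nabla_h\theta'\in L^\infty(0,T;L^3)$ is known. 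That bound is the new step: differentiating \eqref{inPn-3} in $x_j$, $j\in\{1,2\}$, and using $\partial_j\bar{\theta}_z\equiv 0$ (since $\bar{\theta}$ depends on $z$ only) kills the dangerous term $w\,\partial_j\bar{\theta}_z$, leaving $(\partial_j\theta')_t+\mathbf u\cdot\nabla_h\partial_j\theta'=-(\partial_j\mathbf u)\cdot\nabla_h\theta'-(\partial_j w)\bar{\theta}_z$; an $L^3$ estimate (the drift again dropping out), the $L^\infty$ bounds above, and $\|\bar{\theta}_z\|_{L^3}\lesssim\|\theta'\|_{L^6}^2$ give $\frac{d}{dt}\|\nabla_h\theta'\|_{L^3}^3\lesssim\|\theta'\|_{L^6}\|\nabla_h\theta'\|_{L^3}^3+\|\theta'\|_{L^6}^3\|\nabla_h\theta'\|_{L^3}^2$, and Gronwall together with $\theta'\in L^\infty(0,T;L^6)$ yields $\nabla_h\theta'\in L^\infty(0,T;L^3)$, uniformly in $\varepsilon$. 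From the equation, $\theta'_t$ is then bounded in $L^2(\Omega\times(0,T))$ (because $\mathbf u\cdot\nabla_h\theta'$ and $w\bar{\theta}_z$ are bounded in $L^\infty(0,T;L^2)$); a standard compactness argument (Aubin--Lions/Simon, using that $\theta'\mapsto(\mathbf u,w)$ gains two horizontal derivatives, so the velocity converges strongly while $\theta'$ converges weakly) lets me pass $\varepsilon\to0$, the multiplier bounds identify each equation in the stated space, and $\theta'\in C(0,T;L^2)$ follows from weak continuity and continuity of $t\mapsto\|\theta'(t)\|_{L^2}$. (Uniqueness of the strong solution is already contained in Theorem~\ref{wellp}; the $L^2$ estimate below supplies the quantitative continuous dependence.)

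\emph{Uniqueness and continuous dependence.} For two solutions $(\theta'_i,\bar{\theta}_i,\mathbf u_i,w_i)$ set $\Theta=\theta'_1-\theta'_2$; by linearity of the elliptic solve $\mathbf u_1-\mathbf u_2=\mathbf u[\Theta]$, $w_1-w_2=w[\Theta]$, and
\[
\tfrac12\tfrac{d}{dt}\|\Theta\|_{L^2}^2=-\int_\Omega\Theta\,\mathbf u[\Theta]\cdot\nabla_h\theta'_2-\int_\Omega\Theta\big(w_1(\bar{\theta}_1)_z-w_2(\bar{\theta}_2)_z\big),
\]
the transport contribution $\int_\Omega\Theta\,\mathbf u_1\cdot\nabla_h\Theta$ vanishing since $\nabla_h\cdot\mathbf u_1=0$. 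By H\"older ($2,6,3$) and $\|\mathbf u[\Theta]\|_{L^6}\lesssim\|\Theta\|_{L^2}$ the first term is $\lesssim\|\nabla_h\theta'_2\|_{L^3}\|\Theta\|_{L^2}^2$; writing the second as $\int\Theta\,w[\Theta]\,(\bar{\theta}_1)_z+\int\Theta\,w_2\big((\bar{\theta}_1)_z-(\bar{\theta}_2)_z\big)$, the first piece is $\lesssim\|(\bar{\theta}_1)_z\|_{L^3}\|\Theta\|_{L^2}\|w[\Theta]\|_{L^6}\lesssim\|\theta'_1\|_{L^6}^2\|\Theta\|_{L^2}^2$, and since $(\bar{\theta}_1)_z-(\bar{\theta}_2)_z=\overline{\Theta w_1}+\overline{\theta'_2 w[\Theta]}-\langle\cdot\rangle$ is bounded in $L^2(0,2\pi)$ by $\lesssim(\|\theta'_1\|_{L^6}+\|\theta'_2\|_{L^6})\|\Theta\|_{L^2}$ (Jensen, $w_1\in L^\infty(\Omega)$, $\|w[\Theta]\|_{L^6}\lesssim\|\Theta\|_{L^2}$), the second piece is $\lesssim\|w_2\|_{L^\infty}(\|\theta'_1\|_{L^6}+\|\theta'_2\|_{L^6})\|\Theta\|_{L^2}^2$. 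Hence $\frac{d}{dt}\|\Theta\|_{L^2}^2\le C(t)\|\Theta\|_{L^2}^2$ with $C\in L^1(0,T)$ depending only on $\|\theta'_i\|_{L^\infty(0,T;L^6)}$ and $\|\nabla_h\theta'_2\|_{L^\infty(0,T;L^3)}$, so Gronwall gives $\|\Theta(t)\|_{L^2}^2\le e^{\int_0^t C}\|\Theta(0)\|_{L^2}^2$. This yields uniqueness; applied with $\theta'_1=\theta'_n$ (the a priori constants being uniform in $n$, the data being bounded in $L^6$ with horizontal gradient bounded in $L^3$) it gives $\theta'_n\to\theta'$ in $L^\infty(0,T;L^2)$, whence $\|(\mathbf u_n,w_n)-(\mathbf u,w)\|_{H^2_h}=\|(\mathbf u[\theta'_n-\theta'],w[\theta'_n-\theta'])\|_{H^2_h}\lesssim\|\theta'_n-\theta'\|_{L^2}$ gives the $L^\infty(0,T;H^2_h)$ convergence, and the quadratic bound for $\bar{\theta}_z$ applied to $\theta'_n-\theta'$ gives $\bar{\theta}_n\to\bar{\theta}$ in $L^\infty(0,T;\dot H^1)$.

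\emph{Main obstacle.} The heart of the proof is the $\varepsilon$‑uniform $L^3$ bound on $\nabla_h\theta'$, and within it the requirement that $\nabla_h\mathbf u$---equivalently the horizontal Hessian of the stream function---be bounded in $L^\infty(\Omega)$: a Calder\'on--Zygmund estimate only places it in $L^p(\Omega)$, and a gain of one horizontal derivative alone does not reach $L^\infty$ in three space dimensions, so one must exploit the fractional vertical smoothing built into the explicit resolvent multipliers through an anisotropic Sobolev embedding (and likewise for the $L^2\to L^6$ bounds used in the uniqueness step). This is exactly where the $L^p$ Fourier multiplier analysis of the linear part is indispensable, and it explains why the natural hypotheses are $\theta'_0\in L^6(\Omega)$---the threshold at which $\nabla_h\mathbf u$ becomes bounded---together with $\nabla_h\theta'_0\in L^3(\Omega)$, the quantity that is then transported.
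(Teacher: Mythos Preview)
Your overall strategy matches the paper's: regularize by $\varepsilon^2\Delta_h\theta'$, derive $\varepsilon$-uniform $L^6$ bounds on $\theta'_\varepsilon$ and then $L^3$ bounds on $\nabla_h\theta'_\varepsilon$ (the key new estimate), pass to the limit, and prove continuous dependence by an $L^2$ energy estimate on the difference. In particular, your identification of the crucial inequality $\|\nabla_h\mathbf u\|_{L^\infty}+\|\nabla_h w\|_{L^\infty}\lesssim\|\theta'\|_{L^6}$ via anisotropic Sobolev embedding plus the Marcinkiewicz multiplier theorem is exactly what the paper does, and your $L^3$ gradient estimate is essentially the paper's computation.

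There is, however, a genuine gap in your continuous dependence argument. With your splitting and assignment $\theta'_1=\theta'_n$, $\theta'_2=\theta'$, the Gr\"onwall constant involves $\|\theta'_n\|_{L^\infty(0,T;L^6)}$ (through $\|(\bar\theta_1)_z\|_{L^3}\lesssim\|\theta'_1\|_{L^6}^2$ and through $\|w_1\|_{L^\infty}$ in the bound for $(\bar\theta_1)_z-(\bar\theta_2)_z$). Your a priori estimate controls this only in terms of $\|\theta'_{0,n}\|_{L^6}$, but the theorem assumes merely $\theta'_{0,n}\to\theta'_0$ in $L^2$, with no uniform $L^6$ bound on the sequence of data. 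Swapping the roles of $\theta'_1$ and $\theta'_2$ does not help: the first term then requires $\|\nabla_h\theta'_n\|_{L^3}$, again unbounded under the stated hypotheses.

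The paper circumvents this by a different treatment of the term $\int_\Omega w_n(\partial_z\tilde{\bar\theta}_n)\tilde\theta'_n$. Instead of bounding $\partial_z\tilde{\bar\theta}_n$ directly, it writes $\int_\Omega w_n(\partial_z\tilde{\bar\theta}_n)\tilde\theta'_n=4\pi^2\int_0^{2\pi}\overline{w_n\tilde\theta'_n}\,\partial_z\tilde{\bar\theta}_n\,dz$, integrates by parts in $z$, and substitutes the difference equation $\partial_z(\overline{\tilde\theta'_n w_n})=\partial_{zz}\tilde{\bar\theta}_n-\partial_z(\overline{\theta'\tilde w_n})$. This produces the good-sign term $-\int_0^{2\pi}|\partial_z\tilde{\bar\theta}_n|^2\,dz$ together with a remainder $\int_0^{2\pi}\overline{\theta'\tilde w_n}\,\partial_z\tilde{\bar\theta}_n\,dz$ that involves only the \emph{limit} solution $\theta'$ and the difference $\tilde w_n$; the latter is absorbed by the good-sign term after Young's inequality, leaving $C\|\theta'\|_2^2\|\tilde\theta'_n\|_2^2$. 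With this device the entire Gr\"onwall factor depends only on $\|\nabla_h\theta'\|_{L^3}$, $\|\partial_z\bar\theta\|_{L^2}$, and $\|\theta'\|_{L^2}$, all computed on the fixed limit solution, so no uniform bound on the sequence is required. Your argument is easily repaired by inserting this substitution.
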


\vspace{0.1 in}

Our final result concerns the justification of the vanishing diffusivity limit. In our previous work \cite{CGT-3}, we established the global well-posedness of weak solutions  
$(\theta'_{\varepsilon}, \bar{\theta}_{\varepsilon}, \mathbf{u}_{\varepsilon}, w_{\varepsilon})$
to the system:
\begin{align}
& \partial_z \psi_{\varepsilon} = \theta'_{\varepsilon} + \Delta_h w_{\varepsilon}, && \text{in } L^2(0,T; H^1_h(\Omega)), \label{fR-1} \\
& -\partial_z w_{\varepsilon} = \Delta_h \omega_{\varepsilon}, && \text{in } L^2(\Omega \times (0,T)), \label{fR-2} \\
& \partial_t \theta'_{\varepsilon} + \mathbf{u}_{\varepsilon} \cdot \nabla_h \theta'_{\varepsilon} + w_{\varepsilon} \partial_z \bar{\theta}_{\varepsilon} = \varepsilon^2 \Delta_h \theta'_{\varepsilon}, && \text{in } L^2(0,T; (H^1_h(\Omega))'), \label{fR-3} \\
& \partial_z(\overline{\theta'_{\varepsilon} w_{\varepsilon}}) = \partial_{zz} \bar{\theta}_{\varepsilon}, && \text{in } L^2(0,T; H^{-1}(0,2\pi)), \label{fR-4}
\end{align}
with $\nabla_h \cdot \mathbf{u}_{\varepsilon} = 0$, and initial data $\theta'_{\varepsilon}(0) \in L^2(\Omega)$ and $\bar{\theta'_{\varepsilon}(0) } =0$.

The following theorem states that the weak solution of system \eqref{fR-1}--\eqref{fR-4} converges in the $L^2$ norm to the strong solution of system \eqref{iRR-1}--\eqref{iRR-4} as $\varepsilon \to 0$, provided the initial data converge correspondingly in the $L^2$ norm. 

\begin{theorem}[\textbf{Vanishing diffusivity limit}] \label{thm-conv}
Let $(\theta'_{\varepsilon}, \bar{\theta}_{\varepsilon}, \mathbf{u}_{\varepsilon}, w_{\varepsilon})$ be the weak solution of system \eqref{fR-1}--\eqref{fR-4} 
with $\theta'_{\varepsilon}(0) \in L^2(\Omega)$ and $\bar{\theta'_{\varepsilon}(0) } =0$. Let $(\theta', \bar{\theta}, \mathbf{u}, w)$ be the strong solution of system \eqref{iRR-1}--\eqref{iRR-4} such that $\theta'(0) = \theta'_0 \in L^6(\Omega)$ with $\nabla_h \theta'_0 \in L^3(\Omega)$ and $\bar{\theta'_0} =0$. Let $T > 0$.  
Then, as $\varepsilon \to 0$, the solution $(\theta'_{\varepsilon}, \bar{\theta}_{\varepsilon}, \mathbf{u}_{\varepsilon}, w_{\varepsilon})$ converges strongly to $(\theta', \bar{\theta}, \mathbf{u}, w)$ in the norm of the space $L^{\infty}(0,T; L^2(\Omega) \times \dot{H}^1(0,2\pi) \times (H^2_h(\Omega))^3)$, provided that $\|\theta'_{\varepsilon}(0) - \theta'_0\|_2 \to 0$. In particular, if $\|\theta'_{\varepsilon}(0) - \theta'_0\|_2^2 \leq \varepsilon^2$, then
\begin{align} \label{error}
\sup_{t \in [0,T]} \Big( &\|\theta'_{\varepsilon}(t) - \theta'(t)\|_2^2 + \|\bar{\theta}_{\varepsilon}(t) - \bar{\theta}(t)\|_{\dot{H}^1(0,2\pi)}^2 \notag\\
&+ \|\mathbf{u}_{\varepsilon}(t) - \mathbf{u}(t)\|_{H^2_h(\Omega)}^2 + \|w_{\varepsilon}(t) - w(t)\|_{H^2_h(\Omega)}^2 \Big) \leq C \varepsilon^2,
\end{align}
where the constant $C$ depends on $T$, $\|\theta'_0\|_6$, and $\|\nabla_h \theta'_0\|_3$.
\end{theorem}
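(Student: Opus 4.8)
The plan is a weak--strong (relative energy) estimate for the difference $\eta:=\theta'_\varepsilon-\theta'$, built on the fact that the linear elliptic relations \eqref{inPn-1}--\eqref{inPn-2} are $\varepsilon$-independent, so that every other difference is an explicit Fourier multiplier applied to $\eta$. Put $\eta=\theta'_\varepsilon-\theta'$, $\Theta=\bar\theta_\varepsilon-\bar\theta$, $\mathbf U=\mathbf u_\varepsilon-\mathbf u$, $W=w_\varepsilon-w$. Subtracting \eqref{iRR-1}--\eqref{iRR-2} from \eqref{fR-1}--\eqref{fR-2}, the pair $(\mathbf U,W)$ solves the \emph{same} linear system as $(\mathbf u,w)$ but with $\eta$ in place of $\theta'$; writing it in Fourier series exactly as in the proof of Theorem~\ref{wellp} expresses $\mathbf U,W$ as multipliers of $\eta$ with symbols of size $O(|k_h|^{-2})$, from which I record the two bounds used throughout: first $\|\Delta_h\mathbf U\|_2+\|\Delta_h W\|_2\le C\|\eta\|_2$ (the symbols relating $\Delta_h\mathbf u,\Delta_h w$ to $\theta'$ are uniformly bounded, so this is Plancherel); second $\|\mathbf U\|_{L^\infty_z L^2_{xy}}+\|W\|_{L^\infty_z L^2_{xy}}\le C\|\eta\|_2$, obtained by summing each symbol over $k_3$ with Cauchy--Schwarz, its $\ell^2_{k_3}$-norm being $\lesssim|k_h|^{-1/2}$. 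The same computation gives $\|w_\varepsilon\|_{L^\infty_z L^2_{xy}}\le C\|\theta'_\varepsilon\|_2$; the basic $L^2$ identity for the $\varepsilon$-system (test \eqref{fR-3} with $\theta'_\varepsilon$: the transport term vanishes, and by \eqref{fR-4} together with periodicity $\overline{\theta'_\varepsilon w_\varepsilon}$ equals $\partial_z\bar\theta_\varepsilon$ up to a function of $t$ only, so the buoyancy--coupling term is $4\pi^2\|\partial_z\bar\theta_\varepsilon\|_{L^2_z}^2\ge0$) yields $\|\theta'_\varepsilon(t)\|_2\le\|\theta'_\varepsilon(0)\|_2\le C(1+\|\theta'_0\|_6)$. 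Finally, subtracting \eqref{iRR-4} from \eqref{fR-4} and using $\overline{\theta'_\varepsilon w_\varepsilon}-\overline{\theta'w}=\overline{\eta w_\varepsilon}+\overline{\theta'W}$ with the zero $z$-mean of $\bar\theta$, I get $\partial_z\Theta=\overline{\eta w_\varepsilon}+\overline{\theta'W}+c(t)$, $c(t)$ fixed by periodicity, hence $\|\partial_z\Theta\|_{L^2_z}\le C\big(\|w_\varepsilon\|_{L^\infty_z L^2_{xy}}+\|W\|_{L^\infty_z L^2_{xy}}\|\theta'\|_2\big)\|\eta\|_2\le C_T\|\eta\|_2$, with $C_T$ depending only on $T,\|\theta'_0\|_6,\|\nabla_h\theta'_0\|_3$ (using $\theta'\in L^\infty(0,T;L^6(\Omega))$ from Theorem~\ref{wellp2}).

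Next I would subtract \eqref{iRR-3} from \eqref{fR-3} and pair with $\eta$ — legitimate since $\eta\in L^2(0,T;H^1_h(\Omega))$ and $\partial_t\eta\in L^2(0,T;(H^1_h(\Omega))')$, the identity made rigorous by first running the estimate on Galerkin/mollified approximations of the $\varepsilon$-system and passing to the limit. Using the splittings $\mathbf u_\varepsilon\cdot\nabla_h\theta'_\varepsilon-\mathbf u\cdot\nabla_h\theta'=\mathbf u_\varepsilon\cdot\nabla_h\eta+\mathbf U\cdot\nabla_h\theta'$, $\;w_\varepsilon\partial_z\bar\theta_\varepsilon-w\partial_z\bar\theta=w_\varepsilon\partial_z\Theta+W\partial_z\bar\theta$, $\;\varepsilon^2\Delta_h\theta'_\varepsilon=\varepsilon^2\Delta_h\eta+\varepsilon^2\Delta_h\theta'$, and $\int_\Omega(\mathbf u_\varepsilon\cdot\nabla_h\eta)\eta\,dx=0$ (incompressibility), I obtain
\[
\tfrac12\tfrac{d}{dt}\|\eta\|_2^2+\varepsilon^2\|\nabla_h\eta\|_2^2=-\int_\Omega(\mathbf U\cdot\nabla_h\theta')\eta\,dx-\int_\Omega w_\varepsilon(\partial_z\Theta)\eta\,dx-\int_\Omega W(\partial_z\bar\theta)\eta\,dx+\varepsilon^2\int_\Omega(\Delta_h\theta')\eta\,dx=:I_1+I_2+I_3+I_4.
\]

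For the four terms: since $\partial_z\Theta$ and $\partial_z\bar\theta$ depend on $z$ alone, $I_2,I_3$ reduce to one-dimensional integrals against horizontal means, and the bounds above give $|I_3|\le C\|\partial_z\bar\theta\|_{L^2_z}\|W\|_{L^\infty_z L^2_{xy}}\|\eta\|_2\le C\|\bar\theta\|_{\dot H^1(0,2\pi)}\|\eta\|_2^2$ and $|I_2|\le C\|\partial_z\Theta\|_{L^2_z}\|w_\varepsilon\|_{L^\infty_z L^2_{xy}}\|\eta\|_2\le C_T\|\eta\|_2^2$; for $I_4$, integrate by parts, $I_4=-\varepsilon^2\int_\Omega\nabla_h\theta'\cdot\nabla_h\eta\,dx\le\tfrac{\varepsilon^2}{2}\|\nabla_h\eta\|_2^2+\tfrac{\varepsilon^2}{2}\|\nabla_h\theta'\|_2^2$, the first term absorbed on the left and $\|\nabla_h\theta'\|_2\le C\|\nabla_h\theta'\|_3$ bounded on $[0,T]$ by Theorem~\ref{wellp2}, so $I_4$ is an $O(\varepsilon^2)$ forcing. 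The crux is $I_1$: since $\mathbf U$ carries no vertical regularity controlled by $\|\eta\|_2$, neither a direct $L^6(\Omega)$ bound for $\mathbf U$ nor borrowing horizontal derivatives from the $\varepsilon^2$-weighted dissipation is admissible (the latter would implant a fatal $\varepsilon^{-2}$ in the Grönwall exponent). Instead I exploit the precise anisotropy: on each horizontal slice, $2$D Gagliardo--Nirenberg gives $\|\mathbf U(z)\|_{L^6_{xy}}\le C\|\mathbf U(z)\|_{L^2_{xy}}^{2/3}\|\Delta_h\mathbf U(z)\|_{L^2_{xy}}^{1/3}\le C\|\eta\|_2^{2/3}\|\Delta_h\mathbf U(z)\|_{L^2_{xy}}^{1/3}$; then Hölder in $(x,y)$ with exponents $(6,3,2)$ followed by Hölder in $z$ with exponents $(6,3,2)$ — matching $\|\Delta_h\mathbf U(\cdot)\|_{L^2_{xy}}^{1/3}\in L^6_z$, $\|\nabla_h\theta'(\cdot)\|_{L^3_{xy}}\in L^3_z$ (i.e.\ $\nabla_h\theta'\in L^3(\Omega)$), $\|\eta(\cdot)\|_{L^2_{xy}}\in L^2_z$ — yields $|I_1|\le C\|\eta\|_2^{2/3}\|\Delta_h\mathbf U\|_2^{1/3}\|\nabla_h\theta'\|_3\|\eta\|_2\le C\|\nabla_h\theta'\|_3\|\eta\|_2^2$, using $\|\Delta_h\mathbf U\|_2\le C\|\eta\|_2$ once more.

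Collecting, $\tfrac{d}{dt}\|\eta\|_2^2+\varepsilon^2\|\nabla_h\eta\|_2^2\le\phi(t)\|\eta\|_2^2+C\varepsilon^2\|\nabla_h\theta'\|_3^2$ with $\phi\in L^1(0,T)$ and $\int_0^T\phi$ controlled by $T,\|\theta'_0\|_6,\|\nabla_h\theta'_0\|_3$ (Theorem~\ref{wellp2}); Grönwall gives $\sup_{[0,T]}\|\eta\|_2^2\le e^{\int_0^T\phi}\big(\|\theta'_\varepsilon(0)-\theta'_0\|_2^2+CT\varepsilon^2\|\nabla_h\theta'\|_{L^\infty(0,T;L^3(\Omega))}^2\big)$, which tends to $0$ whenever $\|\theta'_\varepsilon(0)-\theta'_0\|_2\to0$ and is $\le C\varepsilon^2$ when $\|\theta'_\varepsilon(0)-\theta'_0\|_2^2\le\varepsilon^2$, with $C=C(T,\|\theta'_0\|_6,\|\nabla_h\theta'_0\|_3)$. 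The remaining unknowns inherit this rate from Step 1 and the elliptic bounds: $\|\Theta(t)\|_{\dot H^1(0,2\pi)}=\|\partial_z\Theta(t)\|_{L^2_z}\le C_T\|\eta(t)\|_2$ and $\|\mathbf U(t)\|_{H^2_h(\Omega)}=\|\Delta_h\mathbf U(t)\|_2\le C\|\eta(t)\|_2$ (and likewise for $W$), so each term in \eqref{error} is $\le C\varepsilon^2$, completing the proof. I expect the only genuine obstacle to be $I_1$: absent vertical smoothing on the velocity difference the crude estimates all fail, and one must feed the slice-wise $L^\infty_z L^2_{xy}$ bound and the $\|\Delta_h\mathbf U\|_{L^2(\Omega)}$ bound through a $2$D interpolation and then a finely balanced Hölder splitting in $z$, so that the Grönwall coefficient depends only on the strong-solution data $\|\theta'_0\|_6,\|\nabla_h\theta'_0\|_3$ (and $T$) and not on $\varepsilon$; everything else is routine by comparison.
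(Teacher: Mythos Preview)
Your proof is correct and follows the same overall weak--strong (relative energy) strategy as the paper: subtract the systems, test the difference equation with $\eta=\theta'_\varepsilon-\theta'$, bound the nonlinear cross-terms by $C(t)\|\eta\|_2^2$ plus an $O(\varepsilon^2)$ forcing, apply Gr\"onwall, and recover the remaining quantities from the linear elliptic relations. Where you diverge is in the two ``hard'' terms. For $I_1=-\int(\mathbf U\cdot\nabla_h\theta')\eta$, you run a slice-wise 2D Gagliardo--Nirenberg $\|\mathbf U(z)\|_{L^6_{xy}}\le C\|\mathbf U(z)\|_{L^2_{xy}}^{2/3}\|\Delta_h\mathbf U(z)\|_{L^2_{xy}}^{1/3}$ together with the $L^\infty_zL^2_{xy}$ bound and a balanced H\"older splitting in $z$; the paper instead proves the single global bound $\|\mathbf U\|_{L^6(\Omega)}\le C\|\eta\|_2$ directly via anisotropic Sobolev embeddings and the explicit Fourier symbol (their estimate (5.13)), then applies H\"older with exponents $(6,3,2)$ in $\Omega$. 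Both routes yield $|I_1|\le C\|\nabla_h\theta'\|_3\|\eta\|_2^2$; the paper's is shorter, yours avoids appealing to the multiplier machinery. For $I_2=-\int w_\varepsilon(\partial_z\Theta)\eta$, you first bound $\|\partial_z\Theta\|_{L^2_z}\le C_T\|\eta\|_2$ algebraically from the averaged equation and then estimate $I_2$ as a lower-order term; the paper instead uses the identity $\overline{w_\varepsilon\eta}\,\partial_z\Theta=(\partial_z\Theta)^2-(\overline{\theta'W})\partial_z\Theta$ coming from \eqref{inun-4}/\eqref{df-4} to extract a good-sign contribution $-2\pi^2\|\partial_z\Theta\|_{L^2_z}^2$ in the energy balance. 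Your approach trades that structural cancellation for the explicit pointwise control of $\partial_z\Theta$, which is equally effective here. Everything else matches the paper.
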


\vspace{0.1 in}

\begin{remark}
In \cite{CGT-3}, we established the global well-posedness of both weak and strong solutions to the system (\ref{fR-1})--(\ref{fR-4}) using the Galerkin method. With the well-posedness in place, we can derive an error estimate between the Galerkin approximate solution $\theta'_{\varepsilon,m}$ and the exact solution $\theta'_{\varepsilon}$ of the system (\ref{fR-1})--(\ref{fR-4}), assuming sufficiently regular initial data. Here, $m$ denotes the number of modes in the Galerkin approximation. 
By estimating the error $\|\theta'_{\varepsilon,m} - \theta'_{\varepsilon}\|_2$ and using (\ref{error}) along with the triangle inequality, one can show that there is a constant $C>0$, independent of $\varepsilon$, such that for every $\varepsilon>0$ there exists $m_0(\varepsilon) \ge 1$ for which whenever $m> m_0(\varepsilon)$ one has $\|\theta'_{\varepsilon,m} - \theta'\|_2 \leq C\varepsilon$.  This suggests that, in numerical simulations, the Galerkin solution $\theta'_{\varepsilon,m}$ of the thermally diffusive system (\ref{fR-1})--(\ref{fR-4}) can be used to approximate the exact solution $\theta'$ of the non-diffusive system (\ref{iRR-1})--(\ref{iRR-4}). For an example of such a ``double approximation'' (distinguished limit) error estimate, see the work of Cao and Titi in \cite{CT09}.
\end{remark}

\vspace{0.1 in}

\section{A Fourier Multiplier Result}

The following Fourier multiplier result will be used repeatedly throughout the paper.

\begin{proposition} \label{Fmre}
Let $a$, $b$, $c$, and $d$ be positive real numbers satisfying
$$\frac{a}{c} + \frac{b}{d} \leq 1.$$
Then the bounded sequence 
\begin{align*} 
m(\mathbf{k}) =
\begin{cases}
 \dfrac{(1+ k_3^2)^{a} (k_1^2 + k_2^2)^{b}}{(k_3^2)^c + (k_1^2 + k_2^2)^d}, & \text{if } \mathbf{k} = (k_1, k_2, k_3) \in \mathbb{Z}^3 \text{ with } k_1^2 + k_2^2 \neq 0, \\
0, & \text{if } \mathbf{k} = (k_1, k_2, k_3) \in \mathbb{Z}^3 \text{ with } k_1 = k_2 = 0,
\end{cases}
\end{align*}
is an $L^p$ Fourier multiplier on the 3D torus $\Omega = [0, 2\pi]^3$ for any $1 < p < \infty$. That is,
\begin{align} \label{app}
\left\| \sum_{\mathbf{k} \in \mathbb{Z}^3} m(\mathbf{k}) \hat{f}(\mathbf{k}) e^{i \mathbf{k} \cdot \mathbf{x}} \right\|_p \leq C_p \|f\|_p, \quad \text{for all } f \in L^p(\Omega).
\end{align}
\end{proposition}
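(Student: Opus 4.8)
\textbf{Proof proposal for Proposition \ref{Fmre}.}

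The plan is to reduce the statement to the classical Marcinkiewicz multiplier theorem on the torus (in the form due to, e.g., Stein--Weiss or the periodic analogue of Mikhlin--H\"ormander), which asserts that a bounded sequence $m(\mathbf k)$ on $\mathbb Z^3$ is an $L^p$ Fourier multiplier for all $1<p<\infty$ provided that, together with its discrete analogues of the mixed partial derivatives up to order one in each variable, it satisfies the appropriate Marcinkiewicz-type bounds over dyadic blocks. Equivalently, I would exhibit a function $\widetilde m(\xi)$ on $\mathbb R^3\setminus\{0\}$ that restricts to $m(\mathbf k)$ on the lattice (away from the degenerate set $k_1=k_2=0$) and satisfies the Mikhlin--H\"ormander condition $|\partial^\alpha \widetilde m(\xi)|\le C|\xi|^{-|\alpha|}$ for $|\alpha|\le 2$ (or the full Marcinkiewicz product condition), and then invoke the standard transference/comparison principle that passes from $\mathbb R^3$ multipliers to $\mathbb Z^3$ multipliers.

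The key steps, in order, are as follows. First, handle the degenerate directions: since $m$ is set to $0$ when $k_1=k_2=0$, and more generally the numerator $(k_1^2+k_2^2)^b$ vanishes to positive order as the horizontal frequency goes to zero, the potential singularity of the denominator along the $k_3$-axis is killed; I would make this quantitative by splitting into the region $|(k_1,k_2)|\gtrsim |k_3|$ and $|(k_1,k_2)|\lesssim |k_3|$. Second, establish the crucial pointwise bound $0\le m(\mathbf k)\le C$: using $1+k_3^2 \le C\big((k_3^2)+(k_1^2+k_2^2)\big)$ on the support (where $k_1^2+k_2^2\ge 1$) one sees
\[
m(\mathbf k)\le C\,\frac{\big((k_3^2)+(k_1^2+k_2^2)\big)^{a}(k_1^2+k_2^2)^{b}}{(k_3^2)^c+(k_1^2+k_2^2)^d},
\]
and then the hypothesis $\frac{a}{c}+\frac{b}{d}\le 1$ is exactly what makes this homogeneous-type expression bounded, via the weighted AM--GM / Young-type inequality $X^{a/c}Y^{b/d}\le \frac{a}{c}X+\frac{b}{d}Y\le X+Y$ applied with $X=(k_3^2)^c/D$, $Y=(k_1^2+k_2^2)^d/D$ and $D$ the denominator. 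Third, verify the Marcinkiewicz derivative conditions: replace $k_3^2$ and $k_1^2+k_2^2$ by continuous variables $\xi_3^2$ and $\rho^2=\xi_1^2+\xi_2^2$, observe that $\widetilde m$ is (up to the harmless factor $(1+\xi_3^2)^a$ versus $(\xi_3^2)^a$, which I would absorb by a further splitting into $|\xi_3|\le 1$ and $|\xi_3|\ge 1$) a ratio of functions that are jointly homogeneous in $(\xi_3,\rho)$, hence each differentiation in $\xi_3$ or in the horizontal variables gains the expected decay in $|\xi|$; the mixed second derivative is handled the same way. Finally, apply the periodic Marcinkiewicz/transference theorem to conclude \eqref{app}.

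The main obstacle I anticipate is not the boundedness of $m$ itself — that is a clean consequence of $\frac{a}{c}+\frac{b}{d}\le 1$ — but rather the regularity bookkeeping needed for the multiplier theorem: one must treat the non-smoothness of $\rho=\sqrt{\xi_1^2+\xi_2^2}$ at the origin and the mismatch between $(1+k_3^2)^a$ and a genuinely homogeneous symbol, and one must confirm that differentiating the denominator $(\xi_3^2)^c+\rho^{2d}$ never produces a worse singularity than $|\xi|^{-|\alpha|}$ relative to the size of $\widetilde m$. I would organize this by working separately on the two cones $\{\rho\ge |\xi_3|\}$ and $\{\rho\le |\xi_3|\}$, on each of which one of the two terms in the denominator dominates and the symbol becomes a smooth homogeneous function of a single effective scale, making the Mikhlin-type estimates routine; matching the two pieces across $\rho=|\xi_3|$ is then standard. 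An alternative, if one wishes to avoid the continuous extension entirely, is to prove the discrete Marcinkiewicz conditions directly by estimating the forward differences $m(\mathbf k+\mathbf e_j)-m(\mathbf k)$ through the same cone decomposition; the computations are morally identical.
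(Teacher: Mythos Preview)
Your proposal is correct and follows essentially the same strategy as the paper: extend the symbol to $\mathbb R^3$, verify the Marcinkiewicz product-type derivative bounds, apply the Marcinkiewicz multiplier theorem on $\mathbb R^3$, and transfer to the torus via the standard de Leeuw/Grafakos transference. The paper's execution is slightly slicker than your cone decomposition: it simply sets $\widetilde m=0$ on the slab $(\xi_1,\xi_2)\in(-\tfrac12,\tfrac12)^2$ --- whose boundary lies on dyadic endpoints, so the resulting discontinuity never falls inside a dyadic rectangle --- and then checks the bounds $|\partial_{\xi_{j_1}\cdots\xi_{j_\ell}}\widetilde m|\le C/|\xi_{j_1}\cdots\xi_{j_\ell}|$ directly, which sidesteps both the $\rho\lessgtr|\xi_3|$ and the $|\xi_3|\lessgtr 1$ splittings you anticipate.
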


\begin{proof}
Notice that $m(\mathbf{k})$ is defined on the integer lattice points $\mathbf{k} = (k_1, k_2, k_3) \in \mathbb{Z}^3$.
We extend $m(\mathbf{k})$ to a function $\tilde{m}(\boldsymbol{\xi})$ defined on the entire space $\mathbb{R}^3$. 
Specifically, define the bounded function $\tilde{m} : \mathbb{R}^3 \to \mathbb{R}$ by
\begin{align*}  
\tilde{m}(\boldsymbol{\xi}) =  
\begin{cases}
  \dfrac{(1 + \xi_3^2)^{a} (\xi_1^2 + \xi_2^2)^{b}}{(\xi_3^2)^c + (\xi_1^2 + \xi_2^2)^d}, & \text{if } \boldsymbol{\xi} = (\xi_1, \xi_2, \xi_3) \in \mathbb{R}^3 \text{ with } (\xi_1, \xi_2) \notin \left(-\tfrac{1}{2}, \tfrac{1}{2}\right)^2, \\
  0, & \text{if } \boldsymbol{\xi} = (\xi_1, \xi_2, \xi_3) \in \mathbb{R}^3 \text{ with } (\xi_1, \xi_2) \in \left(-\tfrac{1}{2}, \tfrac{1}{2}\right)^2.
\end{cases}
\end{align*}

It is straightforward to verify that there exists a constant $C > 0$ such that
\begin{align*}
&\left|\partial_{\xi_1} \tilde{m}(\boldsymbol{\xi})\right| \leq \frac{C}{|\xi_1|}, \quad
\left|\partial_{\xi_2} \tilde{m}(\boldsymbol{\xi})\right| \leq \frac{C}{|\xi_2|}, \quad
\left|\partial_{\xi_3} \tilde{m}(\boldsymbol{\xi})\right| \leq \frac{C}{|\xi_3|}, \\
&\left|\partial^2_{\xi_1 \xi_2} \tilde{m}(\boldsymbol{\xi})\right| \leq \frac{C}{|\xi_1 \xi_2|}, \quad
\left|\partial^2_{\xi_2 \xi_3} \tilde{m}(\boldsymbol{\xi})\right| \leq \frac{C}{|\xi_2 \xi_3|}, \quad
\left|\partial^2_{\xi_1 \xi_3} \tilde{m}(\boldsymbol{\xi})\right| \leq \frac{C}{|\xi_1 \xi_3|}, \\
&\left|\partial^3_{\xi_1 \xi_2 \xi_3} \tilde{m}(\boldsymbol{\xi})\right| \leq \frac{C}{|\xi_1 \xi_2 \xi_3|},
\end{align*}
in each dyadic rectangle $I_{j_1} \times I_{j_2} \times I_{j_3}$ for $j_1, j_2, j_3 \in \mathbb{Z}$, where the dyadic intervals are defined by
$I_j = (-2^{j+1}, -2^j) \cup (2^j, 2^{j+1}), \;j \in \mathbb{Z}$.
The constant $C$ is uniform across all dyadic rectangles. Thus, by the Marcinkiewicz Multiplier Theorem, $\tilde{m}(\boldsymbol{\xi})$ is an $L^p$ Fourier multiplier on $\mathbb{R}^3$ for any $1 < p < \infty$. For a precise statement of the Marcinkiewicz Multiplier Theorem, see, e.g., Theorem 5.2.4 on page 363 of Grafakos \cite{Grafakos}.

It is well known that there are general methods to transfer multipliers between $\mathbb{R}^n$ and the torus $\mathbb{T}^n$, known as \emph{transference of multipliers}. In particular, using Theorem 3.6.7 and Lemma 3.6.8 on page 224 of Grafakos \cite{Grafakos}, and noting that $\tilde{m}$ is an $L^p$ Fourier multiplier on $\mathbb{R}^3$ and is continuous at every integer lattice point $\mathbf{k} \in \mathbb{Z}^3$, we conclude that the sequence $\{\tilde{m}(\mathbf{k})\}_{\mathbf{k} \in \mathbb{Z}^3}$ is an $L^p$ Fourier multiplier on the 3D torus $\Omega = \mathbb T^3$. Since $\tilde{m}(\mathbf{k}) = m(\mathbf{k})$ for all $\mathbf{k} \in \mathbb{Z}^3$, it follows that $\{m(\mathbf{k})\}_{\mathbf{k} \in \mathbb{Z}^3}$ is also an $L^p$ Fourier multiplier on the 3D torus $\Omega$.
\end{proof}

\vspace{0.1 in}

\section{Global existence and uniqueness of weak solutions}

This section is devoted to proving Theorem \ref{wellp}, namely, the global existence and uniqueness of weak solutions to system (\ref{iR-1})-(\ref{iR-4}).

\subsection{Global Existence of Weak Solutions}   \label{apri}

We use a regularization method to justify the global existence of weak solutions to system~(\ref{iR-1})--(\ref{iR-4}).  
By adding a horizontal thermal diffusion term to equation~(\ref{iR-3}), we obtain system~(\ref{fRs-1})--(\ref{fRs-4}) below. 
Since a diffusion term has a smoothing effect, we regard system~(\ref{fRs-1})--(\ref{fRs-4}) as a regularization of system~(\ref{iR-1})--(\ref{iR-4}).  
In our paper~\cite{CGT-3}, we proved the global well-posedness of strong solutions to system~(\ref{fRs-1})--(\ref{fRs-4}) for initial data in \( H^1(\Omega) \).  
In this section, we aim to show that there exists a sequence \( \varepsilon \to 0 \) such that the corresponding sequence of strong solutions  
\( (\theta'_{\varepsilon}, \bar{\theta}_{\varepsilon}, \mathbf{u}_{\varepsilon}, w_{\varepsilon}) \) of system~(\ref{fRs-1})--(\ref{fRs-4})  
converges to a function \( (\theta', \bar{\theta}, \mathbf{u}, w) \), which is a weak solution of system~(\ref{iR-1})--(\ref{iR-4}),  
provided the initial data converge accordingly.

Assume \( \theta'_0 \in L^6(\Omega) \) with \( \bar{\theta'_0} = 0 \). Since \( H^1(\Omega) \) is dense in \( L^6(\Omega) \), there exists a sequence \( \theta'_{0,\varepsilon} \in H^1(\Omega) \) such that \( \bar{\theta'_{0,\varepsilon}} = 0 \) and \( \|\theta'_{0,\varepsilon} - \theta'_0\|_6 \to 0 \) as \( \varepsilon \to 0 \). According to our paper~\cite{CGT-3}, system~(\ref{fRs-1})--(\ref{fRs-4})  
admits a sequence of global strong solutions \( (\theta'_{\varepsilon}, \bar{\theta}_{\varepsilon}, \mathbf{u}_{\varepsilon}, w_{\varepsilon}) \) with initial data  
\( \theta'_{\varepsilon}(0) = \theta'_{0,\varepsilon} \in H^1(\Omega) \):
\begin{align}
& \partial_z \psi_{\varepsilon} = \theta'_{\varepsilon} + \Delta_h w_{\varepsilon}, && \text{in } L^{\infty}(0,T; H^1(\Omega)), \label{fRs-1} \\
& -\partial_z w_{\varepsilon} = \Delta_h \omega_{\varepsilon}, && \text{in } L^{\infty}(0,T; L^2(\Omega)), \label{fRs-2} \\
& \partial_t \theta'_{\varepsilon} + \mathbf{u}_{\varepsilon} \cdot \nabla_h \theta'_{\varepsilon} + w_{\varepsilon} \partial_z \bar{\theta}_{\varepsilon} = \varepsilon^2 \Delta_h \theta'_{\varepsilon}, && \text{in } L^2(\Omega \times (0,T)), \label{fRs-3} \\
& \partial_z(\overline{\theta'_{\varepsilon} w_{\varepsilon}}) = \partial_{zz} \bar{\theta}_{\varepsilon}, && \text{in } L^{\infty}(0,T; L^2(0,2\pi)), \label{fRs-4}
\end{align}
with \( \nabla_h \cdot \mathbf{u}_{\varepsilon} = 0 \). Here, $T>0$ is given arbitrarily large.

The strong solutions have regularity:  
\begin{align*}  
&\theta'_{\varepsilon} \in L^{\infty}(0,T; H^1(\Omega)) \cap C([0,T]; L^2(\Omega)) ,  \;\;  \Delta_h \theta'_{\varepsilon}, \nabla_h \partial_z \theta'_{\varepsilon} \in L^2(\Omega \times (0,T)),  \\
& \bar{\theta}_{\varepsilon} \in L^{\infty}(0,T; \dot{H}^2(0,2\pi)) , \\
&\Delta_h \mathbf{u}_{\varepsilon}, \Delta_h w_{\varepsilon} \in L^{\infty}(0,T; H^1(\Omega)) , \;\;
 \partial_z \mathbf{u}_{\varepsilon}, \partial_z w_{\varepsilon} \in L^{\infty}(0,T; L^2(\Omega)). 
\end{align*}

Our goal is to establish uniform bounds for the \( L^6 \)-norm of \( \theta'_{\varepsilon} \), ensuring that these bounds are independent of \( \varepsilon \).

\subsubsection{Solutions of the Linear Equations (\ref{fRs-1})--(\ref{fRs-2})}

Note that the linear equations (\ref{fRs-1})--(\ref{fRs-2}) connect \( \theta'_{\varepsilon} \) with \( \psi_{\varepsilon} \), \( w_{\varepsilon} \), and \( \omega_{\varepsilon} \).  
In other words, the temperature variation \( \theta'_{\varepsilon} \) is linearly related to the velocity field.  
Since equations (\ref{fRs-1})--(\ref{fRs-2}) are linear, they can be solved explicitly via the Fourier transform.  
Indeed, we have
\begin{align}  
& \widehat{u_{\varepsilon}}(\mathbf{k},t) = \left( \frac{-k_2 k_3}{k_3^2 + (k_1^2 + k_2^2)^3} \right) \widehat{\theta'_{\varepsilon}}(\mathbf{k},t), \quad
\widehat{v_{\varepsilon}}(\mathbf{k},t) = \left( \frac{k_1 k_3}{k_3^2 + (k_1^2 + k_2^2)^3} \right) \widehat{\theta'_{\varepsilon}}(\mathbf{k},t), \label{Four-uv} \\
& \widehat{w_{\varepsilon}}(\mathbf{k},t) = \left( \frac{(k_1^2 + k_2^2)^2}{k_3^2 + (k_1^2 + k_2^2)^3} \right) \widehat{\theta'_{\varepsilon}}(\mathbf{k},t), \label{Four-w}
\end{align}
for all \( \mathbf{k} = (k_1, k_2, k_3) \in \mathbb{Z}^3 \) such that \( k_1^2 + k_2^2 \neq 0 \).

\vspace{0.1 in}

\subsubsection{\( L^2 \) Estimate of \( \theta'_{\varepsilon} \)} \label{L2est}

Taking the inner product of equation~(\ref{fRs-3}) with \( \theta'_{\varepsilon} \) yields
\begin{align} \label{L2theta1}
\frac{1}{2} \frac{d}{dt} \|\theta'_{\varepsilon}\|_2^2 + \varepsilon^2 \|\nabla_h \theta'_{\varepsilon}\|_2^2 +
\int_{\Omega} w_{\varepsilon} \frac{\partial \bar{\theta}_{\varepsilon}}{\partial z} \theta'_{\varepsilon} \, dx\,dy\,dz = 0,
\quad \text{for all } t \geq 0.
\end{align}

By applying integration by parts and using equation~(\ref{fRs-4}), we rewrite the nonlinear term:
\begin{align} \label{L2theta2}
\int_{\Omega} w_{\varepsilon} \frac{\partial \bar{\theta}_{\varepsilon}}{\partial z} \theta'_{\varepsilon} \, dx\,dy\,dz 
&= 4\pi^2 \int_0^{2\pi} \overline{w_{\varepsilon} \theta'_{\varepsilon}} \frac{\partial \bar{\theta}_{\varepsilon}}{\partial z} \, dz 
= -4\pi^2 \int_0^{2\pi} \frac{\partial}{\partial z} \left( \overline{w_{\varepsilon} \theta'_{\varepsilon}} \right) \bar{\theta}_{\varepsilon} \, dz \notag \\
&= -4\pi^2 \int_0^{2\pi} \frac{\partial^2 \bar{\theta}_{\varepsilon}}{\partial z^2} \bar{\theta}_{\varepsilon} \, dz 
= 4\pi^2 \int_0^{2\pi} \left| \frac{\partial \bar{\theta}_{\varepsilon}}{\partial z} \right|^2 dz.
\end{align}

Combining equations~(\ref{L2theta1}) and~(\ref{L2theta2}), we obtain
\begin{align} \label{L2theta3}
\frac{1}{2} \frac{d}{dt} \|\theta'_{\varepsilon}\|_2^2 + \varepsilon^2 \|\nabla_h \theta'_{\varepsilon}\|_2^2 + 
4\pi^2 \int_0^{2\pi} \left| \partial_z \bar{\theta}_{\varepsilon} \right|^2 dz = 0,
\quad \text{for all } t \geq 0.
\end{align}

Integrating equation~(\ref{L2theta3}) over the interval \( [0, t] \), we obtain
\begin{align} \label{L2theta4}
\frac{1}{2} \|\theta'_{\varepsilon}(t)\|_2^2 + 
\varepsilon^2 \int_0^t \|\nabla_h \theta'_{\varepsilon}(s)\|_2^2 \, ds + 
4\pi^2 \int_0^t \int_0^{2\pi} \left| \partial_z \bar{\theta}_{\varepsilon} \right|^2 dz \, ds 
= \frac{1}{2} \|\theta'_{0,\varepsilon}\|_2^2 \leq C \|\theta'_0\|_2^2,
\end{align}
for all \( t \geq 0 \), where the initial data satisfy \( \theta'_{\varepsilon}(0) = \theta'_{0,\varepsilon} \).  
The inequality in~(\ref{L2theta4}) follows from the fact that \( \|\theta'_{0,\varepsilon} - \theta'_0\|_6 \to 0 \) as \( \varepsilon \to 0 \).

The energy estimate~(\ref{L2theta4}) shows that the \( L^2 \)-norm of \( \theta'_{\varepsilon} \) is uniformly bounded by \( C\|\theta'_0\|_2 \) for all \( t \geq 0 \).  
Moreover, the norm \( \|\partial_z \bar{\theta}_{\varepsilon}\|_{L^2((0,2\pi) \times (0,t))} \) is also uniformly bounded by \( C\|\theta'_0\|_2 \) for all \( t \geq 0 \).  
Note that these bounds are independent of \( \varepsilon \).

\vspace{0.1 in}

\subsubsection{\( L^3 \) Estimate of \( \theta'_{\varepsilon} \)}

This subsection is devoted to estimating the \( L^3 \)-norm of \( \theta'_{\varepsilon} \).

Multiplying equation~(\ref{fRs-3}) by \( |\theta'_{\varepsilon}|\theta'_{\varepsilon} \) and integrating over \( \Omega \), we obtain
\begin{align} \label{L3theta1}
\frac{1}{3} \frac{d}{dt} \|\theta'_{\varepsilon}\|_3^3 
+ 2\varepsilon^2 \int_{\Omega} |\nabla_h \theta'_{\varepsilon}|^2 |\theta'_{\varepsilon}| \, dx\,dy\,dz 
+ \int_{\Omega} w_{\varepsilon} \frac{\partial \bar{\theta}_{\varepsilon}}{\partial z} |\theta'_{\varepsilon}|\theta'_{\varepsilon} \, dx\,dy\,dz = 0,
\end{align}
for all $t \geq 0$.

From equation~(\ref{fRs-4}), we have \( \frac{\partial \bar{\theta}_{\varepsilon}}{\partial z} = \overline{\theta'_{\varepsilon} w_{\varepsilon}} + c(t) \),  
where \( c(t) \) depends only on time. Since \( \bar{\theta}_{\varepsilon} \) is periodic on \( [0,2\pi] \), it follows that
\[
0 = \bar{\theta}_{\varepsilon}(2\pi) - \bar{\theta}_{\varepsilon}(0) = \int_0^{2\pi} \overline{\theta'_{\varepsilon} w_{\varepsilon}} \, dz + 2\pi c(t),
\]
which implies
\[
c(t) = -\frac{1}{2\pi} \int_0^{2\pi} \overline{\theta'_{\varepsilon} w_{\varepsilon}} \, dz.
\]
Therefore,
\begin{align} \label{L3theta1'}
\frac{\partial \bar{\theta}_{\varepsilon}}{\partial z} 
= \overline{\theta'_{\varepsilon} w_{\varepsilon}} - \frac{1}{2\pi} \int_0^{2\pi} \overline{\theta'_{\varepsilon} w_{\varepsilon}} \, dz 
= \frac{1}{4\pi^2} \int_{[0,2\pi]^2} \theta'_{\varepsilon} w_{\varepsilon} \, dx\,dy 
- \frac{1}{8\pi^3} \int_{\Omega} \theta'_{\varepsilon} w_{\varepsilon} \, dx\,dy\,dz.
\end{align}

Using~(\ref{L3theta1'}) and Hölder's inequality, we deduce
\begin{align} \label{L3theta2}
\left| \int_{\Omega} w_{\varepsilon} \frac{\partial \bar{\theta}_{\varepsilon}}{\partial z} |\theta'_{\varepsilon}|\theta'_{\varepsilon} \, dx\,dy\,dz \right| 
\leq C \left( \sup_{z \in [0,2\pi]} \int_{[0,2\pi]^2} |w_{\varepsilon}|^3 \, dx\,dy \right)^{2/3} \|\theta'_{\varepsilon}\|_3^3.
\end{align}

We now estimate the term \( \sup_{z \in [0,2\pi]} \int_{[0,2\pi]^2} |w_{\varepsilon}|^3 \, dx\,dy \) using Sobolev embeddings in the horizontal and vertical directions,  
along with the explicit relation between the velocity field and \( \theta'_{\varepsilon} \) given in~(\ref{Four-uv})--(\ref{Four-w}).

Let \( A = -\Delta_h \). Recall the two-dimensional embedding \( H^{1/3}([0,2\pi]^2) \hookrightarrow L^3([0,2\pi]^2) \) and the one-dimensional embedding
\( H^s(0,2\pi) \hookrightarrow L^{\infty}(0,2\pi) \) for \( s > 1/2 \), in particular \( H^{5/9}(0,2\pi) \hookrightarrow L^{\infty}(0,2\pi) \).  
Applying these successively, we obtain
\begin{align} \label{L3theta3'}
&\sup_{z \in [0,2\pi]} \left( \int_{[0,2\pi]^2} |w_{\varepsilon}(x,y,z)|^3 \, dx\,dy \right)^{1/3} 
= \sup_{z \in [0,2\pi]} \|w_{\varepsilon}(z)\|_{L^3([0,2\pi]^2)}           \notag \\
&\leq C \sup_{z \in [0,2\pi]} \|w_{\varepsilon}(z)\|_{H^{1/3}([0,2\pi]^2)}   \leq C \left(\int_{[0,2\pi]^2}\sup_{z\in [0,2\pi]}   |  A^{1/6} w_{\varepsilon}|^2 \, dx\,dy \right)^{1/2}         \notag\\
&\leq C \left( \int_{[0,2\pi]^2} \|A^{1/6} w_{\varepsilon}\|_{H^{5/9}(0,2\pi)}^2 \, dx\,dy \right)^{1/2} = C \|(I - \partial_{zz})^{5/18} A^{1/6} w_{\varepsilon}\|_2.
\end{align}

From~(\ref{Four-w}), we have
\begin{align} \label{L3theta3''}
\|(I - \partial_{zz})^{5/18} A^{1/6} w_{\varepsilon}\|_2^2 
\leq \sum_{\substack{\mathbf{k} = (k_1,k_2,k_3) \in \mathbb{Z}^3 \\ k_1^2 + k_2^2 \neq 0}} 
\frac{(1 + k_3^2)^{5/9} (k_1^2 + k_2^2)^{13/3}}{k_3^4 + (k_1^2 + k_2^2)^6} |\widehat{\theta'_{\varepsilon}}(\mathbf{k})|^2 
\leq C \|\theta'_{\varepsilon}\|_2^2.
\end{align}

Combining~(\ref{L3theta3'}) and~(\ref{L3theta3''}), we obtain
\begin{align} \label{L3theta3b}
\sup_{z \in [0,2\pi]} \left( \int_{[0,2\pi]^2} |w_{\varepsilon}|^3 \, dx\,dy \right)^{1/3} \leq C \|\theta'_{\varepsilon}\|_2.
\end{align}

Substituting~(\ref{L3theta3b}) into~(\ref{L3theta2}), we find
\begin{align} \label{L3theta3}
\left| \int_{\Omega} w_{\varepsilon} \frac{\partial \bar{\theta}_{\varepsilon}}{\partial z} |\theta'_{\varepsilon}|\theta'_{\varepsilon} \, dx\,dy\,dz \right| 
\leq C \|\theta'_{\varepsilon}\|_2^2 \|\theta'_{\varepsilon}\|_3^3 
\leq C \|\theta'_0\|_2^2 \|\theta'_{\varepsilon}\|_3^3,
\end{align}
where we used the uniform bound \( \|\theta'_{\varepsilon}(t)\|_2 \leq C\|\theta'_0\|_2 \) for all \( t \geq 0 \), from~(\ref{L2theta4}).

Therefore, from~(\ref{L3theta1}) and~(\ref{L3theta3}), we conclude
\begin{align} \label{L3theta4}
\frac{d}{dt} \|\theta'_{\varepsilon}\|_3^3 \leq C \|\theta'_0\|_2^2 \|\theta'_{\varepsilon}\|_3^3,
\quad \text{for all } t \geq 0.
\end{align}

Applying Grönwall's inequality yields
\begin{align} \label{L3theta5}
\|\theta'_{\varepsilon}(t)\|_3^3 \leq \|\theta'_{0,\epsilon}\|_3^3 \exp\left( C \|\theta'_0\|_2^2 t \right) 
\leq C\|\theta'_0 \|_3^3 \exp\left( C \|\theta'_0\|_2^2 t \right),
\quad \text{for all } t
\geq 0.
\end{align}

This shows that the \( L^3 \)-norm of \( \theta'_{\varepsilon} \) is bounded above by a quantity that grows at most exponentially in time,  
with the bound depending only on the initial data \( \theta'_0 \), and crucially, it is \emph{independent} of \( \varepsilon \).

\vspace{0.1 in}

\subsubsection{\( L^6 \) Estimate of \( \theta'_{\varepsilon} \)} \label{sec-lp}

To find a uniform bound for the \( L^6 \)-norm of \( \theta'_{\varepsilon} \), we use the bound for the \( L^3 \)-norm established in~(\ref{L3theta5}).

Multiplying equation~(\ref{fRs-3}) by \( (\theta_{\varepsilon}')^5 \) and integrating over \( \Omega \), we obtain
\begin{align} \label{Lptheta1}   
\frac{1}{6} \frac{d}{dt} \|\theta'_{\varepsilon}\|_6^6
+ 5\varepsilon^2 \int_{\Omega} |\nabla_h \theta'_{\varepsilon}|^2 |\theta'_{\varepsilon}|^4 \, dx\,dy\,dz 
+ \int_{\Omega} w_{\varepsilon} \frac{\partial \bar{\theta}_{\varepsilon}}{\partial z} (\theta_{\varepsilon}')^5 \, dx\,dy\,dz = 0,
\end{align}
for all \( t \geq 0 \).

To estimate the last term in~(\ref{Lptheta1}), we proceed analogously to the previous subsection.  
Using~(\ref{L3theta1'}) and Hölder's inequality, we obtain
\begin{align} \label{Lptheta2}
\left| \int_{\Omega} w_{\varepsilon} \frac{\partial \bar{\theta}_{\varepsilon}}{\partial z} (\theta'_{\varepsilon})^5 \, dx\,dy\,dz \right| 
\leq C \sup_{z \in [0,2\pi]} \left( \int_{[0,2\pi]^2} |w_{\varepsilon}|^6 \, dx\,dy \right)^{1/3} \|\theta'_{\varepsilon}\|_6^6.
\end{align}

Our goal is to show that the quantity \( \sup_{z \in [0,2\pi]} \int_{[0,2\pi]^2} |w_{\varepsilon}|^6 \, dx\,dy \)  
is bounded in terms of the \( L^3 \)-norm of \( \theta'_{\varepsilon} \), and then apply~(\ref{L3theta5}) to control it.  
This involves using anisotropic Sobolev embeddings and Fourier multiplier theory.

Let \( A = -\Delta_h \). Recall the two-dimensional Sobolev embedding \( W^{\frac{1}{3},3}([0,2\pi]^2) \hookrightarrow L^6([0,2\pi]^2) \). Then,
\begin{align} \label{Lptheta2'}
&\sup_{z \in [0,2\pi]} \left( \int_{[0,2\pi]^2} |w_{\varepsilon}|^6 \, dx\,dy \right)^{1/6} 
= \sup_{z \in [0,2\pi]} \|w_{\varepsilon}(z)\|_{L^6([0,2\pi]^2)} \leq C \sup_{z \in [0,2\pi]} \|w_{\varepsilon}(z)\|_{W^{\frac{1}{3},3}([0,2\pi]^2)} \notag \\
&= C  \sup_{z \in [0,2\pi]}    \left( \int_{[0,2\pi]^2}  |A^{1/6} w_{\varepsilon}|^3 \, dx\,dy \right)^{1/3} \leq  C \left( \int_{[0,2\pi]^2} \sup_{z \in [0,2\pi]} |A^{1/6} w_{\varepsilon}|^3 \, dx\,dy \right)^{1/3}.
\end{align}

Also, from the one-dimensional embedding \( W^{s,3}(0,2\pi) \hookrightarrow L^{\infty}(0,2\pi) \) for \( s > 1/3 \),  
we use \( W^{\frac{1}{2},3}(0,2\pi) \hookrightarrow L^{\infty}(0,2\pi) \) to get
\begin{align} \label{Lptheta2''}
\sup_{z \in [0,2\pi]} |A^{1/6} w_{\varepsilon}| 
\leq C \|A^{1/6} w_{\varepsilon}\|_{W^{\frac{1}{2},3}(0,2\pi)} 
= C \left( \int_0^{2\pi} |(I - \partial_{zz})^{1/4} A^{1/6} w_{\varepsilon}|^3 \, dz \right)^{1/3}.
\end{align}

Combining~(\ref{Lptheta2'}) and~(\ref{Lptheta2''}), we obtain
\begin{align} \label{Lptheta3}
\sup_{z \in [0,2\pi]} \left( \int_{[0,2\pi]^2} |w_{\varepsilon}|^6 \, dx\,dy \right)^{1/6} 
\leq C \| (I - \partial_{zz})^{1/4} A^{1/6} w_{\varepsilon} \|_3.
\end{align}

We claim that \( \| (I - \partial_{zz})^{1/4} A^{1/6} w_{\varepsilon} \|_3 \leq C \|\theta'_{\varepsilon}\|_3 \).  
Indeed, from~(\ref{Four-w}), we have
\[
(I - \partial_{zz})^{1/4} A^{1/6} w_{\varepsilon} = \sum_{\mathbf{k} \in \mathbb{Z}^3} m(\mathbf{k}) \widehat{\theta'_{\varepsilon}}(\mathbf{k}) e^{i \mathbf{k} \cdot \mathbf{x}},
\]
where
\[
m(\mathbf{k}) = 
\begin{cases}
\frac{(1 + k_3^2)^{1/4} (k_1^2 + k_2^2)^{13/6}}{k_3^2 + (k_1^2 + k_2^2)^3}, & \text{if } k_1^2 + k_2^2 \neq 0, \\
0, & \text{if } k_1 = k_2 = 0,
\end{cases}
\]
for $\mathbf{k}=(k_1, k_2, k_3) \in \mathbb Z^3$.

By Proposition~\ref{Fmre}, \( m(\mathbf{k}) \) is an \( L^p \) Fourier multiplier for all \( 1 < p < \infty \). Hence,
\begin{align} \label{Lptheta3'}
\|(I - \partial_{zz})^{1/4} A^{1/6} w_{\varepsilon}\|_3 
\leq C \|\theta'_{\varepsilon}\|_3.
\end{align}

Combining~(\ref{Lptheta3}) and~(\ref{Lptheta3'}), we conclude
\begin{align} \label{Lptheta3''}
\sup_{z \in [0,2\pi]} \left( \int_{[0,2\pi]^2} |w_{\varepsilon}|^6 \, dx\,dy \right)^{1/6} 
\leq C \|\theta'_{\varepsilon}\|_3.
\end{align}

Substituting into~(\ref{Lptheta2}), we obtain the estimate for the nonlinear term:
\begin{align} \label{Lptheta4b}
\left| \int_{\Omega} w_{\varepsilon} \frac{\partial \bar{\theta}_{\varepsilon}}{\partial z} (\theta'_{\varepsilon})^5 \, dx\,dy\,dz \right| 
\leq C \|\theta'_{\varepsilon}\|_3^2 \|\theta'_{\varepsilon}\|_6^6.
\end{align}

Therefore, from~(\ref{Lptheta1}) and~(\ref{Lptheta4b}), we get
\begin{align} \label{Lptheta4}
\frac{d}{dt} \|\theta'_{\varepsilon}\|_6^6 \leq C \|\theta'_{\varepsilon}\|_3^2 \|\theta'_{\varepsilon}\|_6^6, \quad \text{for all } t \geq 0.
\end{align}

Applying Grönwall's inequality and using~(\ref{L3theta5}), we derive
\begin{align} \label{Lptheta5}
\|\theta'_{\varepsilon}(t)\|_6^6 
\leq \|\theta'_{0,\varepsilon}\|_6^6 \exp\left( C \int_0^t \|\theta'_{\varepsilon}(s)\|_3^2 \, ds \right) 
\leq C \|\theta'_0\|_6^6 \exp\left( C \|\theta'_0\|_3^2 \int_0^t e^{C \|\theta'_0\|_2^2 s} \, ds \right),
\end{align}
for all $t\geq 0$. This shows that the \( L^6 \)-norm of \( \theta'_{\varepsilon} \) is bounded above by a function that grows double-exponentially in time.  
Importantly, this bound is \emph{independent} of \( \varepsilon \).

Additionally, by using (\ref{Four-uv})--(\ref{Four-w}) and (\ref{Lptheta5}), and applying Proposition~\ref{Fmre}, we obtain the uniform bound:
\begin{align} \label{Lpuvw}
\|\mathbf{u}_{\varepsilon}(t)\|_6 + \|w_{\varepsilon}(t)\|_6 \leq C \|\theta'_{\varepsilon}(t)\|_6 \leq C(\|\theta'_0\|_6, t).
\end{align}

\vspace{0.1 in}

\subsubsection{Passing to the Limit} \label{passage}

In this subsection, we show that one can extract a subsequence of \( (\theta'_{\varepsilon}, \bar{\theta}_{\varepsilon}, \mathbf{u}_{\varepsilon}, w_{\varepsilon}) \) that converges to a weak solution of system~(\ref{iR-1})--(\ref{iR-4}).

Let \( T > 0 \) be arbitrary. Due to the uniform bound~(\ref{Lptheta5}), there exists a function \( \theta' \) and a subsequence of \( \varepsilon \) such that
\begin{align}
\theta'_{\varepsilon} \rightharpoonup \theta' \quad \text{weakly$^*$ in } L^{\infty}(0,T; L^6(\Omega)), \quad \text{as } \varepsilon \to 0. \label{ex2}
\end{align}

To pass to the limit in the nonlinear terms, we need strong convergence of \( \mathbf{u}_{\varepsilon} \) and \( w_{\varepsilon} \).  
To this end, we estimate the time derivative \( \partial_t \theta'_{\varepsilon} \).  
Since \( \nabla_h \cdot \mathbf{u}_{\varepsilon} = 0 \), we have, for any \( \varphi \in H^1_h(\Omega) \),
\begin{align*}
\int_{\Omega} (\mathbf{u}_{\varepsilon} \cdot \nabla_h \theta'_{\varepsilon}) \varphi \, dx\,dy\,dz 
&= -\int_{\Omega} (\mathbf{u}_{\varepsilon} \cdot \nabla_h \varphi) \theta'_{\varepsilon} \, dx\,dy\,dz \\
&\leq \|\mathbf{u}_{\varepsilon}\|_3 \|\theta'_{\varepsilon}\|_6 \|\nabla_h \varphi\|_2 
\leq C(T, \|\theta'_0\|_6) \|\nabla_h \varphi\|_2,
\end{align*}
where we used~(\ref{Lptheta5}) and~(\ref{Lpuvw}). Therefore,
\begin{align} \label{ex5}
\sup_{t \in [0,T]} \|\mathbf{u}_{\varepsilon} \cdot \nabla_h \theta'_{\varepsilon}\|_{(H^1_h(\Omega))'} \leq C(T, \|\theta'_0\|_6).
\end{align}

Next, we estimate \( w_{\varepsilon} \, \partial_z \bar{\theta}_{\varepsilon} \).  
Using~(\ref{Four-w}), we obtain \( \|(I - \partial_{zz})^{1/3} w_{\varepsilon}\|_2 \leq C \|\theta'_{\varepsilon}\|_2 \).  
Then, for any \( \phi \in L^2(\Omega \times (0,T)) \), we have
\begin{align*}
\int_0^T \int_{\Omega} (w_{\varepsilon} \, \partial_z \bar{\theta}_{\varepsilon}) \phi \, dx\,dy\,dz\,dt 
&\leq \int_0^T \left( \int_{[0,2\pi]^2} \sup_{z \in [0,2\pi]} |w_{\varepsilon}|^2 \, dx\,dy \right)^{1/2} \|\phi\|_2 \left( \int_0^{2\pi} |\partial_z \bar{\theta}_{\varepsilon}|^2 \, dz \right)^{1/2} dt \\
&\leq  C \int_0^T \|  (I - \partial_{zz})^{1/3} w_{\varepsilon}\|_2 \|\phi\|_2 \left( \int_0^{2\pi} |\partial_z \bar{\theta}_{\varepsilon}|^2 \, dz \right)^{1/2} dt \\
&\leq C \int_0^T \|\theta'_{\varepsilon}\|_2 \|\phi\|_2 \left( \int_0^{2\pi} |\partial_z \bar{\theta}_{\varepsilon}|^2 \, dz \right)^{1/2} dt \\
&\leq C \|\theta'_0\|_2 \|\phi\|_{L^2(\Omega \times (0,T))} \|\partial_z \bar{\theta}_{\varepsilon}\|_{L^2((0,2\pi) \times (0,T))} \\
&\leq C \|\theta'_0\|_2^2 \|\phi\|_{L^2(\Omega \times (0,T))},
\end{align*}
where we used~(\ref{L2theta4}). Hence,
\begin{align} \label{ex6}
\|w_{\varepsilon} \, \partial_z \bar{\theta}_{\varepsilon}\|_{L^2(\Omega \times (0,T))} \leq C \|\theta'_0\|_2^2.
\end{align}

Since \( \varepsilon^2 \int_0^T \|\nabla_h \theta'_{\varepsilon}(s)\|_2^2 \, ds \leq C \|\theta'_0\|_2^2 \) by~(\ref{L2theta4}),  
we obtain that \( \varepsilon^2 \Delta_h \theta'_{\varepsilon} \) is uniformly bounded in \( L^2(0,T; (H^1_h(\Omega))') \).  
Therefore, from~(\ref{fRs-3}),~(\ref{ex5}), and~(\ref{ex6}), we conclude that \( \partial_t \theta'_{\varepsilon} \) is uniformly bounded in \( L^2(0,T; (H^1_h(\Omega))') \).

Moreover, using the explicit relations~(\ref{Four-uv}) and~(\ref{Four-w}), we obtain
\[
\|\partial_t \mathbf{u}_{\varepsilon}\|_2^2 + \|\partial_t w_{\varepsilon}\|_2^2 \leq C \|\partial_t \theta'_{\varepsilon}\|_{(H^1_h(\Omega))'}^2,
\]
so \( \partial_t \mathbf{u}_{\varepsilon} \) and \( \partial_t w_{\varepsilon} \) are uniformly bounded in \( L^2(\Omega \times (0,T)) \).

Additionally, from~(\ref{Four-uv}) and~(\ref{Four-w}), we have
\begin{align*}
\|\Delta_h \mathbf{u}_{\varepsilon}\|_2^2 + \|\Delta_h w_{\varepsilon}\|_2^2 &\leq C \|\theta'_{\varepsilon}\|_2^2, \\
\|(I - \partial_{zz})^{1/3} \mathbf{u}_{\varepsilon}\|_2^2 + \|(I - \partial_{zz})^{1/3} w_{\varepsilon}\|_2^2 &\leq C \|\theta'_{\varepsilon}\|_2^2.
\end{align*}

Thus, \( \mathbf{u}_{\varepsilon} \) and \( w_{\varepsilon} \) are uniformly bounded in \( L^{\infty}(0,T; H^{2/3}(\Omega)) \).  
By the Aubin–Lions compactness theorem, and extracting a subsequence if necessary, we obtain the strong convergence
\begin{align} \label{ex7}
(\mathbf{u}_{\varepsilon}, w_{\varepsilon}) \to (\mathbf{u}, w) \quad \text{strongly in } L^2(\Omega \times (0,T)), \quad \text{as } \varepsilon \to 0.
\end{align}

Using the weak convergence~(\ref{ex2}) and the strong convergence~(\ref{ex7}), we can pass to the limit as \( \varepsilon \to 0 \) in the weak formulation of system~(\ref{fRs-1})--(\ref{fRs-4}).  
This yields the existence of a global weak solution to system~(\ref{iR-1})--(\ref{iR-4}) via a standard argument.

\vspace{0.1 in}

\subsection{Uniqueness of Weak Solutions} \label{sec-uni}

Previously, we established the global existence of weak solutions to system~(\ref{iR-1})--(\ref{iR-4}).  
In this subsection, we justify the uniqueness of such weak solutions.  
The key idea is to perform an energy estimate using a norm weaker than the \( L^2(\Omega) \)-norm; specifically, we use the norm of the dual space \( (H^1_h(\Omega))' \).  
This approach allows us to accommodate weak solutions with initial data in \( L^6(\Omega) \). This idea is adapted from Yudovich \cite{Yu} and Larios et al. \cite{LLT}. 

Let \( (\theta'_1, \bar{\theta}_1, \mathbf{u}_1, w_1) \) and \( (\theta'_2, \bar{\theta}_2, \mathbf{u}_2, w_2) \) be two weak solutions on \( [0,T] \)  
satisfying equations~(\ref{iR-1})--(\ref{iR-4}) and the regularity conditions stated in Theorem~\ref{wellp}.  
Assume that \( \theta'_1(0) = \theta'_2(0) = \theta'_0 \in L^6(\Omega) \).

Let \( \theta' = \theta'_1 - \theta'_2 \), \( \bar{\theta} = \bar{\theta}_1 - \bar{\theta}_2 \), \( \mathbf{u} = \mathbf{u}_1 - \mathbf{u}_2 \),  
\( w = w_1 - w_2 \), \( \omega = \omega_1 - \omega_2 \), and \( \psi = \psi_1 - \psi_2 \). Then, we have
\begin{align}
& \partial_z \psi = \theta' + \Delta_h w, && \text{in } L^{\infty}(0,T; L^6(\Omega)); \label{inun-1} \\
& -\partial_z w = \Delta_h \omega, && \text{in } L^{\infty}(0,T; (H^1_h(\Omega))'); \label{inun-2} \\
& \partial_t \theta' + \mathbf{u} \cdot \nabla_h \theta'_1 + \mathbf{u}_2 \cdot \nabla_h \theta' + w \, \partial_z \bar{\theta}_1 + w_2 \, \partial_z \bar{\theta} = 0, && \text{in } L^2(0,T; (H^1_h(\Omega))'); \label{inun-3} \\
& \partial_z(\overline{\theta' w_2}) + \partial_z(\overline{\theta'_1 w}) = \partial_{zz} \bar{\theta}, && \text{in } L^2(0,T; H^{-1}(0,2\pi)), \label{inun-4}
\end{align}
with \( \nabla_h \cdot \mathbf{u} = 0 \). Note that \( \theta'(0) = 0 \), since \( \theta'_1 \) and \( \theta'_2 \) share the same initial data \( \theta'_0 \).

Using the linear equations~(\ref{inun-1})--(\ref{inun-2}), we obtain that the Fourier coefficients of \( (\mathbf{u}, w) = (u, v, w) \) are related to those of \( \theta' \) by:
\begin{align}
& \widehat{u}(\mathbf{k}, t) = \left( \frac{-k_2 k_3}{k_3^2 + (k_1^2 + k_2^2)^3} \right) \widehat{\theta'}(\mathbf{k}, t), \quad
\widehat{v}(\mathbf{k}, t) = \left( \frac{k_1 k_3}{k_3^2 + (k_1^2 + k_2^2)^3} \right) \widehat{\theta'}(\mathbf{k}, t), \label{F-uv} \\
& \widehat{w}(\mathbf{k}, t) = \left( \frac{(k_1^2 + k_2^2)^2}{k_3^2 + (k_1^2 + k_2^2)^3} \right) \widehat{\theta'}(\mathbf{k}, t), \label{F-w}
\end{align}
for all \( \mathbf{k} = (k_1, k_2, k_3) \in \mathbb{Z}^3 \) such that \( k_1^2 + k_2^2 \neq 0 \).

Recall the notation \( A = -\Delta_h \). Take the duality pairing of (\ref{inun-3}) with \( A^{-1} \theta' \) in \( (H^1_h(\Omega))' \times H^1_h(\Omega) \).  
Also, take the duality pairing of (\ref{inun-4}) with \( \bar{\theta} \) in \( H^{-1}(0,2\pi) \times \dot{H}^1(0,2\pi) \).  
Adding the results yields
\begin{align}  \label{ntd1}
&\frac{1}{2} \frac{d}{dt} \| A^{-1/2} \theta' \|_2^2 + \| \bar{\theta}_z \|_{L^2(0,2\pi)}^2 \notag\\
&= - \left\langle \mathbf{u} \cdot \nabla_h \theta'_1, A^{-1} \theta' \right\rangle
- \left\langle \mathbf{u}_2 \cdot \nabla_h \theta', A^{-1} \theta' \right\rangle  - \left\langle w \, \partial_z \bar{\theta}_1, A^{-1} \theta' \right\rangle
- \left\langle w_2 \bar{\theta}_z, A^{-1} \theta' \right\rangle \notag \\
&\quad + \int_0^{2\pi} (\overline{\theta' w_2}) \, \bar{\theta}_z \, dz
+ \int_0^{2\pi} (\overline{\theta'_1 w}) \, \bar{\theta}_z \, dz. 
\end{align}

In the following, we estimate each term on the right-hand side of (\ref{ntd1}).

Since \( \nabla_h \cdot \mathbf{u} = 0 \), we have
\begin{align} \label{first1}
- \left\langle \mathbf{u} \cdot \nabla_h \theta'_1, A^{-1} \theta' \right\rangle 
= \int_{\Omega} (\mathbf{u} \cdot \nabla_h A^{-1} \theta') \, \theta'_1 \, dx \, dy \, dz 
\leq C \| \mathbf{u} \|_{10/3} \| A^{-1/2} \theta' \|_2 \| \theta'_1 \|_5.
\end{align}

We claim that \( \| \mathbf{u} \|_{10/3} \) is bounded by \( \| A^{-1/2} \theta' \|_2 \). Indeed,  
by using the 2D embedding \( H^{2/5}([0,2\pi]^2) \hookrightarrow L^{10/3}([0,2\pi]^2) \),  
the 1D embedding \( H^{1/5}(0,2\pi) \hookrightarrow L^{10/3}(0,2\pi) \),  
and Minkowski's integral inequality, we obtain
\begin{align} \label{first2}
\| \mathbf{u} \|_{10/3} 
&= \left( \int_0^{2\pi} \| \mathbf{u} \|_{L^{10/3}([0,2\pi]^2)}^{10/3} \, dz \right)^{3/10} \notag \\
&\leq C \left( \int_0^{2\pi} \| A^{1/5} \mathbf{u} \|_{L^2([0,2\pi]^2)}^{10/3} \, dz \right)^{3/10} \notag \\
&\leq C \left( \int_{[0,2\pi]^2} \| A^{1/5} \mathbf{u} \|_{L^{10/3}(0,2\pi)}^2 \, dx \, dy \right)^{1/2} \notag \\
&\leq C \left( \int_{[0,2\pi]^2} \| (I - \partial_{zz})^{1/10} A^{1/5} \mathbf{u} \|_{L^2(0,2\pi)}^2 \, dx \, dy \right)^{1/2} \notag \\
&= C \| (I - \partial_{zz})^{1/10} A^{1/5} \mathbf{u} \|_2 
\leq C \| A^{-1/2} \theta' \|_2,
\end{align}
where the last inequality follows from (\ref{F-uv}), which relates the Fourier coefficients of \( \mathbf{u} \) and \( \theta' \).

Therefore, combining (\ref{first1}) and (\ref{first2}), we obtain
\begin{align} \label{first3}
- \left\langle \mathbf{u} \cdot \nabla_h \theta'_1, \Delta_h^{-1} \theta' \right\rangle
\leq C \| \theta'_1 \|_5 \| A^{-1/2} \theta' \|_2^2.
\end{align}

Also, since \( \nabla_h \cdot \mathbf{u}_2 = 0 \), we have
\begin{align} \label{Second1}
- \left\langle \mathbf{u}_2 \cdot \nabla_h \theta', A^{-1} \theta' \right\rangle
&= \int_{\Omega} (\mathbf{u}_2 \cdot \nabla_h A^{-1} \theta') \, \theta' \, dx \, dy \, dz \notag \\
&= \int_{\Omega} (\mathbf{u}_2 \cdot \nabla_h A^{-1} \theta') \, (-\Delta_h A^{-1} \theta') \, dx \, dy \, dz \notag \\
&\leq C \| \nabla_h \mathbf{u}_2 \|_{\infty} \| \nabla_h A^{-1} \theta' \|_2^2 
\leq C \| \nabla_h \mathbf{u}_2 \|_{\infty} \| A^{-1/2} \theta' \|_2^2.
\end{align}

Since \( W^{\frac{1}{5},6}(0,2\pi) \hookrightarrow L^{\infty}(0,2\pi) \) and \( W^{\frac{2}{5},6}([0,2\pi]^2) \hookrightarrow L^{\infty}([0,2\pi]^2) \), we deduce
\begin{align} \label{Second2}
\| \nabla_h \mathbf{u}_2 \|_{\infty} 
&\leq C \| A^{1/5} (I - \partial_{zz})^{1/10} \nabla_h \mathbf{u}_2 \|_6 
\leq C \| \theta'_2 \|_6,
\end{align}
due to the relation between the Fourier coefficients of \( \mathbf{u}_2 \) and \( \theta'_2 \) as in (\ref{F-uv}),  
and the $L^p$ Fourier multiplier result: Proposition \ref{Fmre}.

By (\ref{Second1}) and (\ref{Second2}), we obtain
\begin{align} \label{Second3}
- \left\langle \mathbf{u}_2 \cdot \nabla_h \theta', A^{-1} \theta' \right\rangle 
\leq C \| \theta'_2 \|_6 \| A^{-1/2} \theta' \|_2^2.
\end{align}

Next, we estimate the third term on the right-hand side of (\ref{ntd1}) as follows:
\begin{align} \label{ntd3'}
- \left\langle w \, \partial_z \bar{\theta}_1, A^{-1} \theta' \right\rangle
&= -\int_0^{2\pi} \partial_z \bar{\theta}_1 \left( \int_{[0,2\pi]^2} w \, A^{-1} \theta' \, dx \, dy \right) dz \notag \\
&\leq C \int_0^{2\pi} \left| \partial_z \bar{\theta}_1 \right| \, \| A^{-1/2} w \|_{L^2([0,2\pi]^2)} \, \| A^{-1/2} \theta' \|_{L^2([0,2\pi]^2)} \, dz \notag \\
&\leq C \| A^{-1/2} \theta' \|_2 \left( \int_0^{2\pi} (\partial_z \bar{\theta}_1)^2 dz \right)^{1/2}
\left( \int_{[0,2\pi]^2} \sup_{z \in [0,2\pi]} |A^{-1/2} w|^2 dx \, dy \right)^{1/2} \notag \\
&\leq C \| A^{-1/2} \theta' \|_2 \, \| \partial_z \bar{\theta}_1 \|_{L^2(0,2\pi)} \, \| (I - \partial_{zz})^{1/3} (A^{-1/2} w) \|_2,
\end{align}
where we have used the one-dimensional embedding \( H^s(0,2\pi) \hookrightarrow L^{\infty}(0,2\pi) \) for \( s > 1/2 \).

From (\ref{F-w}), we obtain
\begin{align} \label{z82}
\| (I - \partial_{zz})^{1/3} (A^{-1/2} w) \|_2^2 \leq C \| A^{-1/2} \theta' \|_2^2.
\end{align}

Using (\ref{ntd3'}) and (\ref{z82}), it follows that
\begin{align} \label{ntd3}
- \left\langle w \, \partial_z \bar{\theta}_1, A^{-1} \theta' \right\rangle 
\leq C \| A^{-1/2} \theta' \|_2^2 \, \| \partial_z \bar{\theta}_1 \|_{L^2(0,2\pi)}.
\end{align}

Now we consider the fourth term on the right-hand side of (\ref{ntd1}). Indeed,
\begin{align} \label{third1}
- \left\langle w_2 \, \bar{\theta}_z, A^{-1} \theta' \right\rangle 
&= - \int_0^{2\pi} \bar{\theta}_z \left( \int_{[0, 2\pi]^2} w_2 \, A^{-1} \theta' \, dx \, dy \right) dz \notag \\
&\leq C \int_0^{2\pi} |\bar{\theta}_z| \, \| w_2 \|_{L^2([0,2\pi]^2)} \, \| A^{-1} \theta' \|_{L^2([0,2\pi]^2)} \, dz \notag \\
&\leq C \| \bar{\theta}_z \|_{L^2(0,2\pi)} \, \| A^{-1} \theta' \|_2 \left( \sup_{z \in [0,2\pi]} \| w_2 \|_{L^2([0,2\pi]^2)} \right).
\end{align}

Similar to (\ref{ntd3'})--(\ref{z82}), we have
\begin{align} \label{third3}
\sup_{z \in [0,2\pi]} \| w_2 \|_{L^2([0,2\pi]^2)}^2 
&\leq \int_{[0,2\pi]^2} \sup_{z \in [0,2\pi]} |w_2|^2 \, dx \, dy 
\leq \| (I - \partial_{zz})^{1/3} w_2 \|_2^2 
\leq C \| \theta'_2 \|_2^2.
\end{align}

Combining (\ref{third1}) and (\ref{third3}) yields
\begin{align} \label{third5}
- \left\langle w_2 \, \bar{\theta}_z, A^{-1} \theta' \right\rangle 
&\leq C \| \bar{\theta}_z \|_{L^2(0,2\pi)} \, \| A^{-1} \theta' \|_2 \, \| \theta'_2 \|_2 \notag \\
&\leq \frac{1}{6} \| \bar{\theta}_z \|_{L^2(0,2\pi)}^2 + C \| A^{-1} \theta' \|_2^2 \, \| \theta'_2 \|_2^2.
\end{align}

Next, we estimate the fifth term on the right-hand side of (\ref{ntd1}) as follows:
\begin{align} \label{fifth1}
\int_0^{2\pi} (\overline{\theta' w_2}) \, \bar{\theta}_z \, dz
&\leq \frac{1}{6} \| \bar{\theta}_z \|_{L^2(0,2\pi)}^2 + C \int_0^{2\pi} (\overline{\theta' w_2})^2 \, dz \notag \\
&\leq \frac{1}{6} \| \bar{\theta}_z \|_{L^2(0,2\pi)}^2 + C \| A^{-1/2} \theta' \|_2^2 \left( \sup_{z \in [0,2\pi]} \| A^{1/2} w_2 \|_{L^2([0,2\pi]^2)}^2 \right).
\end{align}
By the embedding $W^{\frac{1}{3},4}(0,2\pi) \hookrightarrow L^{\infty}(0,2\pi)$, we have
\begin{align} \label{fifth2}
\sup_{z \in [0,2\pi]} \| A^{1/2} w_2 \|_{L^2([0,2\pi]^2)}^2
&\leq  \int_{[0,2\pi]^2} \left( \sup_{z \in [0,2\pi]} |A^{1/2} w_2| \right)^2 dx \, dy \notag \\
&\leq C \int_{[0,2\pi]^2} \| (I - \partial_{zz})^{1/6} A^{1/2} w_2 \|_{L^4(0,2\pi)}^2 dx \, dy \notag \\
&\leq C \| (I - \partial_{zz})^{1/6} A^{1/2} w_2 \|_4^2 
\leq C \| \theta'_2 \|_4^2,
\end{align}
by using the relation between the Fourier coefficients of \( w_2 \) and \( \theta'_2 \) as in (\ref{F-w}),  
and the Fourier multiplier result: Proposition \ref{Fmre}.

Therefore, from (\ref{fifth1}) and (\ref{fifth2}), we obtain
\begin{align} \label{fifth3}
\int_0^{2\pi} (\overline{\theta' w_2}) \, \bar{\theta}_z \, dz
\leq \frac{1}{6} \| \bar{\theta}_z \|_{L^2(0,2\pi)}^2 + C \| A^{-1/2} \theta' \|_2^2 \, \| \theta'_2 \|_4^2.
\end{align}

Finally, we estimate the last term on the right-hand side of (\ref{ntd1}).  
By Hölder's inequality, we have
\begin{align} \label{sixth1}
\int_0^{2\pi} (\overline{\theta'_1 w}) \, \bar{\theta}_z \, dz
&\leq C \int_0^{2\pi} \| \theta'_1 \|_{L^3([0,2\pi]^2)} \| w \|_{L^{3/2}([0,2\pi]^2)} |\bar{\theta}_z| \, dz \notag \\
&\leq C \| \theta'_1 \|_3 \left( \int_0^{2\pi} \| w \|_{L^{3/2}([0,2\pi]^2)}^6 \, dz \right)^{1/6} \| \bar{\theta}_z \|_{L^2(0,2\pi)}.
\end{align}

Thanks to Minkowski's integral inequality, we have
\begin{align} \label{sixth2}
\left( \int_0^{2\pi} \| w \|_{L^{3/2}([0,2\pi]^2)}^6 \, dz \right)^{1/6}
&\leq \left( \int_{[0,2\pi]^2} \| w \|_{L^6(0,2\pi)}^{3/2} \, dx \, dy \right)^{2/3} \notag \\
&\leq C \left( \int_{[0,2\pi]^2} \| (I - \partial_{zz})^{1/6} w \|_{L^2(0,2\pi)}^{3/2} \, dx \, dy \right)^{2/3} \notag \\
&\leq C \left( \int_{[0,2\pi]^2} \| A^{-1/2} \theta' \|_{L^2(0,2\pi)}^{3/2} \, dx \, dy \right)^{2/3} \notag \\
&\leq C \| A^{-1/2} \theta' \|_2,
\end{align}
where we have used the embedding \( H^{1/3}(0,2\pi) \hookrightarrow L^6(0,2\pi) \) and the relation (\ref{F-w}).

Therefore, from (\ref{sixth1}) and (\ref{sixth2}), we obtain
\begin{align} \label{sixth3}
\int_0^{2\pi} (\overline{\theta'_1 w}) \, \bar{\theta}_z \, dz 
\leq \frac{1}{6} \| \bar{\theta}_z \|_{L^2(0,2\pi)}^2 + C \| A^{-1/2} \theta' \|_2^2 \, \| \theta'_1 \|_3^2.
\end{align}

Substituting (\ref{first3}), (\ref{Second3}), (\ref{ntd3}), (\ref{third5}), (\ref{fifth3}), and (\ref{sixth3}) into (\ref{ntd1}) yields
\begin{align} \label{ntd5}
&\frac{d}{dt} \| A^{-1/2} \theta' \|_2^2 + \| \bar{\theta}_z \|_2^2  \notag\\
&\leq C \| A^{-1/2} \theta' \|_2^2 \left( \| \theta'_1 \|_5 + \| \theta'_2 \|_6 + \| \theta'_1 \|_3^2 + \| \theta'_2 \|_4^2 + \| \partial_z \bar{\theta}_1 \|_{L^2(0,2\pi)} \right),
\end{align}
for all \( t \in [0, T] \).

Applying Grönwall’s inequality to (\ref{ntd5}), we obtain
\begin{align} \label{ntd6}
\| A^{-1/2} \theta'(t) \|_2^2 
\leq e^{C \int_0^t \left( \| \theta'_1 \|_5 + \| \theta'_2 \|_6 + \| \theta'_1 \|_3^2 + \| \theta'_2 \|_4^2 + \| \partial_z \bar{\theta}_1 \|_{L^2(0,2\pi)} \right) ds} \, \| A^{-1/2} \theta'(0) \|_2^2,
\end{align}
for all \( t \in [0, T] \).

Since \( \theta'(0) = \theta'_1(0) - \theta'_2(0) = 0 \), it follows from (\ref{ntd6}) that \( \theta' = 0 \) on \( [0, T] \).  
Moreover, due to (\ref{F-uv}) and (\ref{F-w}), we also obtain \( \mathbf{u} = 0 \) and \( w = 0 \) on \( [0, T] \).

Finally, since \( \theta' = 0 \) on \( [0, T] \), it follows from (\ref{ntd5}) that \( \bar{\theta}_z = 0 \) on \( [0, T] \).  
Given that \( \bar{\theta} \in \dot{H}^1(0,2\pi) \), we have \( \int_0^{2\pi} \bar{\theta} \, dz = 0 \).  
Therefore, \( \bar{\theta} = 0 \) on \( [0, T] \).

This completes the proof of the uniqueness of weak solutions to the system (\ref{iR-1})--(\ref{iR-4}) stated in Theorem \ref{wellp}.

\vspace{0.1 in}

\section{Global Well-Posedness of Strong Solutions}

In this section, we prove Theorem \ref{wellp2}, which establishes the global well-posedness of strong solutions to the system (\ref{iRR-1})--(\ref{iRR-4}) with initial data $\theta'_0 \in L^6(\Omega)$ and $\nabla_h \theta'_0 \in L^3(\Omega)$. The regularity of strong solutions enables us to demonstrate not only uniqueness but also continuous dependence on the initial data. This contrasts with the weak solutions established in the previous section, where we proved existence and uniqueness but not continuous dependence. The reason is that the uniqueness argument for weak solutions relies on estimates in a dual space, which cannot be directly adapted to establish continuous dependence on initial data.

Moreover, the strong solutions developed here will play a crucial role in justifying the vanishing diffusivity limit in the next section.

The uniqueness of strong solutions follows directly from the uniqueness of weak solutions. Therefore, it remains to show the global existence of strong solutions and their continuous dependence on the initial data.

\subsection{Global Existence of Strong Solutions}

The strategy for proving the existence of strong solutions to   system (\ref{iRR-1})--(\ref{iRR-4})    is similar to the approach used previously to establish weak solutions. We consider the system (\ref{fRss-1})--(\ref{fRss-4}) below as a regularization of the system (\ref{iRR-1})--(\ref{iRR-4}). Note that the system (\ref{fRss-1})--(\ref{fRss-4}) includes horizontal thermal diffusion, which is absent in the system (\ref{iRR-1})--(\ref{iRR-4}).

Given initial data $\theta'_0 \in L^6(\Omega)$ with $\nabla_h \theta'_0 \in L^3(\Omega)$ and \( \bar{\theta'_{0}} = 0 \), there exists a sequence of functions $\theta'_{0,\varepsilon} \in H^2(\Omega)$ such that $\theta'_{0,\varepsilon} \to \theta'_0$ in $L^6(\Omega)$, $\nabla_h \theta'_{0,\varepsilon} \to \nabla_h \theta'_0$ in $L^3(\Omega)$, and \( \bar{\theta'_{0,\varepsilon}} = 0 \). 

In \cite{CGT-3}, we established the global existence of strong solutions to system (\ref{fRss-1})--(\ref{fRss-4}) with $H^1$ initial data using the Galerkin method. By extending this approach, one can show the global existence of more regular solutions with $H^2$ initial data. Consequently, the system (\ref{fRss-1})--(\ref{fRss-4}) admits a sequence of global regular solutions $(\theta'_{\varepsilon}, \bar{\theta}_{\varepsilon}, \mathbf{u}_{\varepsilon}, w_{\varepsilon})$ with initial data $\theta'_{\varepsilon}(0) = \theta'_{0,\varepsilon} \in H^2(\Omega)$:
\begin{align}
& \partial_z \psi_{\varepsilon} = \theta'_{\varepsilon} + \Delta_h w_{\varepsilon}, && \text{in } L^{\infty}(0,T; H^2(\Omega)), \label{fRss-1} \\
& -\partial_z w_{\varepsilon} = \Delta_h \omega_{\varepsilon}, && \text{in } L^{\infty}(0,T; H^1(\Omega)), \label{fRss-2} \\
& \partial_t \theta'_{\varepsilon} + \mathbf{u}_{\varepsilon} \cdot \nabla_h \theta'_{\varepsilon} + w_{\varepsilon} \partial_z \bar{\theta}_{\varepsilon} = \varepsilon^2 \Delta_h \theta'_{\varepsilon}, && \text{in } L^2(0,T; H^1(\Omega)), \label{fRss-3} \\
& \partial_z(\overline{\theta'_{\varepsilon} w_{\varepsilon}}) = \partial_{zz} \bar{\theta}_{\varepsilon}, && \text{in } L^{\infty}(0,T; H^1(0,2\pi)), \label{fRss-4}
\end{align}
with $\nabla_h \cdot \mathbf{u}_{\varepsilon} = 0$, for any $T>0$. The regular solutions satisfy:
\begin{align*}
& \theta'_{\varepsilon} \in L^{\infty}(0,T; H^2(\Omega)) \cap C([0,T]; H^1(\Omega)), \quad \Delta_h \theta'_{\varepsilon}  \in L^2(0,T; H^1(\Omega)),\\
& \bar{\theta}_{\varepsilon} \in L^{\infty}(0,T; \dot{H}^3(0,2\pi)), \quad \Delta_h \mathbf{u}_{\varepsilon}, \Delta_h w_{\varepsilon} \in L^{\infty}(0,T; H^2(\Omega)).
\end{align*}

We claim that as $\varepsilon \to 0$, the sequence $(\theta'_{\varepsilon}, \bar{\theta}_{\varepsilon}, \mathbf{u}_{\varepsilon}, w_{\varepsilon})$ converges to a function $(\theta', \bar{\theta}, \mathbf{u}, w)$, which is a strong solution of the system (\ref{iRR-1})--(\ref{iRR-4}). A uniform bound for the $L^6$-norm of $\theta'_{\varepsilon}$ has already been established in (\ref{Lptheta5}). It remains to derive a uniform bound for $\|\nabla_h \theta'_{\varepsilon}\|_3$ on $[0,T]$, independent of $\varepsilon$.

\subsubsection{Estimate for $\|\nabla_h \theta'_{\varepsilon}\|_3$}

We differentiate equation (\ref{fRss-3}) with respect to $x$, multiply the result by $|\partial_x \theta'_{\varepsilon}| \partial_x \theta'_{\varepsilon}$, and integrate over $\Omega$. This yields:
\begin{align}     \label{W1p2}
&\frac{1}{3} \frac{d}{dt}\|\partial_x \theta'_{\varepsilon}\|_3^3 
+ \int_{\Omega}   ( \partial_x \mathbf{u}_{\varepsilon} \cdot \nabla_h \theta'_{\varepsilon})  |\partial_x \theta'_{\varepsilon}| \partial_x \theta'_{\varepsilon} \, dx\,dy\,dz  
+  \int_{\Omega}   (\partial_x w_{\varepsilon}) (\partial_z \bar{\theta}_{\varepsilon}) |\partial_x \theta'_{\varepsilon}| \partial_x \theta'_{\varepsilon} \, dx\,dy\,dz     \notag\\
&\hspace{0.2 in} +  2  \varepsilon^2   \int_{\Omega} (|\partial_{xx} \theta'_{\varepsilon}|^2 + |\partial_{xy} \theta'_{\varepsilon}|^2)|\partial_x \theta'_{\varepsilon}| \, dx\,dy\,dz = 0,
\end{align}
for all $t \geq 0$.

We now estimate the nonlinear terms in (\ref{W1p2}). Observe that
\begin{align}    \label{W1p3}
\int_{\Omega}  \left| ( \partial_x \mathbf{u}_{\varepsilon} \cdot \nabla_h \theta'_{\varepsilon}) (\partial_x \theta'_{\varepsilon})^2 \right| \, dx\,dy\,dz  
\leq \|\partial_x \mathbf{u}_{\varepsilon} \|_{\infty} \|\nabla_h \theta'_{\varepsilon}\|_3^3.
\end{align}

We aim to show that $\|\partial_x \mathbf{u}_{\varepsilon}\|_{\infty} \leq C \|\theta'_{\varepsilon}\|_6$. This will provide a uniform bound for $\|\partial_x \mathbf{u}_{\varepsilon}\|_{\infty}$, since $\|\theta'_{\varepsilon}\|_6$ is uniformly bounded by (\ref{Lptheta5}).

Let $A = -\Delta_h$. Recall the 2D Sobolev embedding $W^{\frac{2}{5},6}([0,2\pi]^2) \hookrightarrow L^{\infty}([0,2\pi]^2)$ and the 1D embedding $W^{\frac{1}{5},6}(0,2\pi) \hookrightarrow L^{\infty}(0,2\pi)$. Therefore, for any sufficiently regular function $f$, we have:
\begin{align}  \label{W1p4be}
\|f\|_{\infty} &\leq \sup_{z\in [0,2\pi]}  \left(C\|f(z)\|_{W^{\frac{2}{5},6}( [0,2\pi]^2)}\right)   \leq   C\left(\int_{ [0,2\pi]^2} \sup_{z\in [0,2\pi]} |A^{1/5} f|^6 \, dx\,dy \right)^{1/6}  \notag\\
&\leq  C\left(\int_{ [0,2\pi]^2} \|A^{1/5} f\|^6_{W^{\frac{1}{5},6}(0,2\pi)} \, dx\,dy \right)^{1/6}  
= C\|  (I-\partial_{zz})^{1/10} A^{1/5}  f\|_6.
\end{align}

Substituting $f = \partial_x \mathbf{u}_{\varepsilon}$ into (\ref{W1p4be}) gives:
\begin{align}  \label{mult-6}
\|\partial_x \mathbf{u}_{\varepsilon}\|_{\infty}   \leq C\|  (I-\partial_{zz})^{1/10}A^{1/5}  \partial_x \mathbf{u}_{\varepsilon}\|_6
\leq C\|\theta'_{\varepsilon}\|_6,
\end{align}
where the last inequality follows from (\ref{Four-uv}) and Proposition \ref{Fmre}.

Combining (\ref{W1p3}) and (\ref{mult-6}), we obtain:
\begin{align}    \label{mult-7}
\int_{\Omega}  \left| ( \partial_x \mathbf{u}_{\varepsilon} \cdot \nabla_h \theta'_{\varepsilon}) (\partial_x \theta'_{\varepsilon})^2 \right| \, dx\,dy\,dz 
\leq C \|\theta'_{\varepsilon}\|_6 \|\nabla_h \theta'_{\varepsilon}\|_3^3.
\end{align}

Next, we estimate the remaining nonlinear term in (\ref{W1p2}). Using (\ref{L3theta1'}) and applying Hölder's inequality, we obtain:
\begin{align}  \label{W1p5}
\int_{\Omega}  \left| (\partial_x w_{\varepsilon}) (\partial_z \bar{\theta}_{\varepsilon}) (\partial_x \theta'_{\varepsilon})^2 \right| \, dx\,dy\,dz
\leq C \|\partial_x w_{\varepsilon}\|_{\infty} \|w_{\varepsilon}\|_{\infty} \|\partial_x \theta'_{\varepsilon}\|_3^2 \|\theta'_{\varepsilon}\|_3.
\end{align}

Analogous to (\ref{mult-6}), we can derive:
\begin{align}    \label{W1p5'}
\|\partial_x w_{\varepsilon}\|_{\infty} + \|w_{\varepsilon}\|_{\infty} \leq C\|\theta'_{\varepsilon}\|_6.
\end{align}

Combining (\ref{W1p5}) and (\ref{W1p5'}), we obtain:
\begin{align}  \label{W1p5''}
\int_{\Omega}  \left| (\partial_x w_{\varepsilon}) (\partial_z \bar{\theta}_{\varepsilon}) (\partial_x \theta'_{\varepsilon})^2 \right| \, dx\,dy\,dz 
\leq C \|\theta'_{\varepsilon}\|_6^2 \|\partial_x \theta'_{\varepsilon}\|_3^2 \|\theta'_{\varepsilon}\|_3.
\end{align}

From (\ref{W1p2}), (\ref{mult-7}), and (\ref{W1p5''}), we conclude:
\begin{align}   \label{W1p8b}
\frac{d}{dt}\|\partial_x \theta'_{\varepsilon}\|_3^3 \leq C (\|\theta'_{\varepsilon}\|_6^2 + 1) \|\nabla_h \theta'_{\varepsilon}\|_3^3,
\quad \text{for all } t \geq 0.
\end{align}

A similar estimate can be derived for $\frac{d}{dt}\|\partial_y \theta'_{\varepsilon}\|_3^3$. Then, applying Gr\"onwall's inequality and using (\ref{Lptheta5}), we obtain:
\begin{align}    \label{W1p8}
\|\nabla_h \theta'_{\varepsilon}(t)\|_3^3
&\leq \|\nabla_h \theta'_0\|_3^3 \exp \left( \int_0^t C(\|\theta'_{\varepsilon}\|_6^2 + 1) \, ds \right) \notag\\
&\leq \|\nabla_h \theta'_0\|_3^3 \exp \left(C \int_0^t \left(\|\theta'_0\|_6^2  
\exp\left( C \|\theta'_0\|_3^2 \int_0^s e^{C\|\theta'_0\|_2^2 \tau} \, d\tau \right) + 1\right) ds \right),
\end{align}
for all $t \geq 0$.

This shows that $\|\nabla_h \theta'_{\varepsilon}(t)\|_3$ is bounded by a function that grows triple-exponentially in time, and this bound is independent of $\varepsilon$.

Using the uniform bound (\ref{Lptheta5}) for $\|\theta'_{\varepsilon}(t)\|_6$ and the uniform bound (\ref{W1p8}) for $\|\nabla_h \theta'_{\varepsilon}(t)\|_3$, and applying a standard argument similar to that used for weak solutions in Subsection \ref{passage}, we can pass to the limit as $\varepsilon \rightarrow 0$ and establish the existence of a global strong solution to the system (\ref{iRR-1})--(\ref{iRR-4}).

\vspace{0.1 in}

\subsection{Continuous Dependence on Initial Data}

Given a sequence of initial data $\theta_{0,n}' \to \theta_0'$ in $L^2(\Omega)$ such that $\theta'_{0,n}, \theta'_0 \in L^6(\Omega)$, $\nabla_h \theta'_{0,n}, \nabla_h \theta'_0 \in L^3(\Omega)$, and $\bar{\theta'_{0,n}} = \bar{\theta'_0} = 0$, consider the corresponding strong solutions $\theta'_n$ and $\theta'$ with $\theta'_n(0) = \theta'_{0,n}$ and $\theta'(0) = \theta'_0$. We aim to show that $\theta'_n \to \theta'$ in $L^{\infty}(0,T; L^2(\Omega))$.

Let $\tilde \theta'_n = \theta'_n - \theta'$, $\tilde{\bar{\theta}}_n = \bar{\theta}_n - \bar{\theta}$, $\tilde{\mathbf{u}}_n = \mathbf{u}_n - \mathbf{u}$, $\tilde{w}_n = w_n - w$, $\tilde{\omega}_n = \omega_n - \omega$, and $\tilde{\psi}_n = \psi_n - \psi$. Then the following system holds:
\begin{align}
& \partial_z \tilde \psi_n = \tilde \theta'_n + \Delta_h \tilde w_n, && \text{in } L^{\infty}(0,T; H^1_h(\Omega)); \label{inun-11} \\
& -\partial_z \tilde w_n = \Delta_h \tilde \omega_n, && \text{in } L^{\infty}(0,T; L^3(\Omega)); \label{inun-22} \\
& \partial_t \tilde \theta'_n + \tilde{\mathbf{u}}_n \cdot \nabla_h \theta' + \mathbf{u}_n \cdot \nabla_h \tilde \theta'_n + \tilde w_n \partial_z \bar{\theta} + w_n \partial_z \tilde{\bar{\theta}}_n = 0, && \text{in } L^2(\Omega \times (0,T)); \label{inun-33} \\
& \partial_z(\overline{\tilde \theta'_n w_n}) + \partial_z(\overline{\theta' \tilde w_n}) = \partial_{zz} \tilde{\bar{\theta}}_n, && \text{in } L^2(0,T; H^{-1}(0,2\pi)), \label{inun-44}
\end{align}
with $\nabla_h \cdot \tilde{\mathbf{u}}_n = 0$.

Taking the $L^2$ inner product of (\ref{inun-33}) with $\tilde \theta'_n$ yields:
\begin{align} \label{ntd11}
\frac{1}{2} \frac{d}{dt} \|\tilde \theta'_n\|_2^2 
= & -\int_{\Omega} (\tilde{\mathbf{u}}_n \cdot \nabla_h \theta') \tilde \theta'_n \, dx\,dy\,dz 
- \int_{\Omega} \tilde w_n (\partial_z \bar{\theta}) \tilde \theta'_n \, dx\,dy\,dz \notag \\
& - \int_{\Omega} w_n (\partial_z \tilde{\bar{\theta}}_n) \tilde \theta'_n \, dx\,dy\,dz.
\end{align}

Applying Hölder's inequality, we estimate:
\begin{align} \label{ntd2'}
\int_{\Omega} \left| (\tilde{\mathbf{u}}_n \cdot \nabla_h \theta') \tilde \theta'_n \right| \, dx\,dy\,dz 
\leq  \|\tilde{\mathbf{u}}_n\|_6 \|\nabla_h \theta'\|_3 \|\tilde \theta'_n\|_2.
\end{align}

Let $A = -\Delta_h$. We aim to show that $\|\tilde{\mathbf{u}}_n\|_6$ is bounded by $\|\tilde \theta'_n\|_2$. Using Sobolev embeddings in the horizontal and vertical directions, the 1D embedding $H^{1/3}(0,2\pi) \hookrightarrow L^6(0,2\pi)$, the 2D embedding $H^{2/3}([0,2\pi]^2) \hookrightarrow L^6([0,2\pi]^2)$, and Minkowski’s integral inequality, we obtain:
\begin{align} \label{z6}
\|\tilde{\mathbf{u}}_n\|_6 
&= \left( \int_{[0,2\pi]^2} \|\tilde{\mathbf{u}}_n\|_{L^6(0,2\pi)}^6 \, dx\,dy \right)^{1/6}
\leq C \left( \int_{[0,2\pi]^2} \|\tilde{\mathbf{u}}_n\|_{H^{1/3}(0,2\pi)}^6 \, dx\,dy \right)^{1/6} \notag \\
&= C \left( \int_{[0,2\pi]^2} \|(I - \partial_{zz})^{1/6} \tilde{\mathbf{u}}_n\|_{L^2(0,2\pi)}^6 \, dx\,dy \right)^{1/6} \notag \\
&\leq C \left( \int_0^{2\pi} \|(I - \partial_{zz})^{1/6} \tilde{\mathbf{u}}_n\|_{L^6([0,2\pi]^2)}^2 \, dz \right)^{1/2}   \notag\\
&\leq C \|A^{1/3}(I - \partial_{zz})^{1/6} \tilde{\mathbf{u}}_n\|_2 \leq C \|\tilde \theta'_n\|_2,
\end{align}
where the last inequality follows from (\ref{F-uv}).

Combining (\ref{ntd2'}) and (\ref{z6}), we obtain:
\begin{align} \label{ntd22}
\int_{\Omega} \left| (\tilde{\mathbf{u}}_n \cdot \nabla_h \theta') \tilde \theta'_n \right| \, dx\,dy\,dz 
\leq C \|\tilde \theta'_n\|_2^2 \|\nabla_h \theta'\|_3.
\end{align}

Next, we estimate the second term on the right-hand side of (\ref{ntd11}). Using the 1D embedding $H^s(0,2\pi) \hookrightarrow L^{\infty}(0,2\pi)$ for $s > 1/2$, we have:
\begin{align} \label{ntd33}
\int_{\Omega} \left| \tilde w_n \frac{\partial \bar{\theta}}{\partial z} \tilde \theta'_n \right| \, dx\,dy\,dz 
&\leq \|\tilde \theta'_n\|_2 \left( \int_0^{2\pi} \left| \frac{\partial \bar{\theta}}{\partial z} \right|^2 \, dz \right)^{1/2}
\left( \int_{[0,2\pi]^2} \sup_{z \in [0,2\pi]} |\tilde w_n|^2 \, dx\,dy \right)^{1/2} \notag \\
&\leq C \|\tilde \theta'_n\|_2 \left( \int_0^{2\pi} \left| \frac{\partial \bar{\theta}}{\partial z} \right|^2 \, dz \right)^{1/2}
\|(I - \partial_{zz})^{1/3} \tilde w_n\|_2 \notag \\
&\leq C \|\tilde \theta'_n\|_2^2 \left( \int_0^{2\pi} \left| \frac{\partial \bar{\theta}}{\partial z} \right|^2 \, dz \right)^{1/2},
\end{align}
where the last inequality follows from (\ref{F-w}).

Now we consider the third term on the right-hand side of (\ref{ntd11}). From (\ref{inun-44}), we know that
\[
\frac{\partial(\overline{\tilde \theta'_n w_n})}{\partial z} = \frac{\partial^2 \tilde{\bar \theta}_n}{\partial z^2} - \frac{\partial(\overline{\theta' \tilde w_n})}{\partial z} \quad \text{in } L^2(0,T; H^{-1}(0,2\pi)),
\]
and thus,
\begin{align} \label{inu-8}
- \frac{1}{4\pi^2} \int_{\Omega} w_n (\partial_z \tilde{\bar{\theta}}_n) \tilde \theta'_n \, dx\,dy\,dz
&= -\int_0^{2\pi} \overline{w_n \tilde \theta'_n} \, \partial_z \tilde{\bar{\theta}}_n \, dz
= \left\langle \frac{\partial (\overline{w_n \tilde \theta'_n})}{\partial z}, \tilde{\bar{\theta}}_n \right\rangle \notag \\
&= \left\langle \frac{\partial^2 \tilde{\bar \theta}_n}{\partial z^2} - \frac{\partial(\overline{\theta' \tilde w_n})}{\partial z}, \tilde{\bar{\theta}}_n \right\rangle \notag \\
&= -\int_0^{2\pi} |\partial_z \tilde{\bar{\theta}}_n|^2 \, dz + \int_0^{2\pi} \overline{\theta' \tilde w_n} \, \partial_z \tilde{\bar{\theta}}_n \, dz \notag \\
&\leq -\frac{1}{2} \int_0^{2\pi} |\partial_z \tilde{\bar{\theta}}_n|^2 \, dz + \frac{1}{2} \int_0^{2\pi} |\overline{\theta' \tilde w_n}|^2 \, dz.
\end{align}

Similar to the estimate in (\ref{ntd33}), we deduce:
\begin{align} \label{inu-9}
\int_0^{2\pi} |\overline{\theta' \tilde w_n}|^2 \, dz 
&\leq C \int_0^{2\pi} \left( \int_{[0,2\pi]^2} |\theta'|^2 \, dx\,dy \right) \left( \int_{[0,2\pi]^2} |\tilde w_n|^2 \, dx\,dy \right) dz \notag \\
&\leq C \left( \int_{[0,2\pi]^2} \sup_{z \in [0,2\pi]} |\tilde w_n|^2 \, dx\,dy \right) \|\theta'\|_2^2 \notag \\
&\leq C \|(I - \partial_{zz})^{1/3} \tilde w_n\|_2^2 \|\theta'\|_2^2 \leq C \|\tilde \theta'_n\|_2^2 \|\theta'\|_2^2.
\end{align}

Combining (\ref{inu-8}) and (\ref{inu-9}) gives:
\begin{align} \label{ntd44}
- \int_{\Omega} w_n (\partial_z \tilde{\bar{\theta}}_n) \tilde \theta'_n \, dx\,dy\,dz 
\leq -2\pi^2 \int_0^{2\pi} |\partial_z \tilde{\bar{\theta}}_n|^2 \, dz + C \|\tilde \theta'_n\|_2^2 \|\theta'\|_2^2.
\end{align}

Substituting (\ref{ntd22}), (\ref{ntd33}), and (\ref{ntd44}) into (\ref{ntd11}) yields:
\begin{align} \label{ntd55}
\frac{d}{dt} \|\tilde \theta'_n\|_2^2 
\leq C \|\tilde \theta'_n\|_2^2 \left( \|\nabla_h \theta'\|_3 + \left( \int_0^{2\pi} \left| \frac{\partial \bar{\theta}}{\partial z} \right|^2 dz \right)^{1/2} + \|\theta'\|_2^2 \right),
\end{align}
for all \( t \in [0,T] \).

Applying Grönwall’s inequality to (\ref{ntd55}), we obtain:
\begin{align} \label{ntd66}
\|\tilde \theta'_n(t)\|_2^2 
\leq \exp\left(C \int_0^t \left[ \|\nabla_h \theta'\|_3 + \left( \int_0^{2\pi} \left| \frac{\partial \bar{\theta}}{\partial z} \right|^2 dz \right)^{1/2} + \|\theta'\|_2^2 \right] ds \right) \|\tilde \theta'_n(0)\|_2^2.
\end{align}

Since \( \|\tilde \theta'_n(0)\|_2 = \|\theta'_{0,n} - \theta'_0\|_2 \to 0 \) as \( n \to \infty \), it follows that:
\begin{align} \label{ntd66'}
\sup_{t \in [0,T]} \|\tilde \theta'_n(t)\|_2 \to 0 \quad \text{as } n \to \infty, \quad \text{that is, } \theta'_n \to \theta' \text{ in } L^{\infty}(0,T; L^2(\Omega)).
\end{align}

Due to the linear relations between Fourier coefficients in (\ref{F-uv}) and (\ref{F-w}), we have
$\|\Delta_h \tilde{\mathbf{u}}_n\|_2^2 + \|\Delta_h \tilde{w}_n\|_2^2 \leq C \|\tilde \theta'_n\|_2^2$,
and along with (\ref{ntd66'}), we obtain \( \mathbf{u}_n \to \mathbf{u} \) and \( w_n \to w \) in \( L^{\infty}(0,T; H^2_h(\Omega)) \).

To establish continuous dependence for the horizontal average temperature \( \bar{\theta} \), we deduce from (\ref{inun-44}) that:
\begin{align} \label{ntd77}
\int_0^{2\pi} |\partial_z \tilde{\bar{\theta}}_n|^2 dz 
&= \int_0^{2\pi} \left[ \overline{\tilde \theta'_n w_n} + \overline{\theta' \tilde w_n} \right] \partial_z \tilde{\bar{\theta}}_n \, dz \notag \\
&\leq \frac{1}{2} \int_0^{2\pi} |\partial_z \tilde{\bar{\theta}}_n|^2 dz + \frac{1}{2} \int_0^{2\pi} \left|\overline{\tilde \theta'_n w_n}\right|^2 dz + \frac{1}{2} \int_0^{2\pi} \left|\overline{\theta' \tilde w_n}\right|^2 dz.
\end{align}

Therefore,
\begin{align} \label{ntd88}
\int_0^{2\pi} |\partial_z \tilde{\bar{\theta}}_n|^2 dz 
&\leq \int_0^{2\pi} \left|\overline{\tilde \theta'_n w_n}\right|^2 dz + \int_0^{2\pi} \left|\overline{\theta' \tilde w_n}\right|^2 dz \notag \\
&\leq C \|\tilde \theta'_n\|_2^2 \sup_{z \in [0,2\pi]} \|w_n\|_{L^2([0,2\pi]^2)}^2 + C \|\theta'\|_2^2 \sup_{z \in [0,2\pi]} \|\tilde w_n\|_{L^2([0,2\pi]^2)}^2 \notag \\
&\leq C \|\tilde \theta'_n\|_2^2 \|(I - \partial_{zz})^{1/3} w_n\|_2^2 
+ C \|\theta'\|_2^2 \|(I - \partial_{zz})^{1/3} \tilde w_n\|_2^2 \notag \\
&\leq C \|\tilde \theta'_n\|_2^2 \left( \|\theta'_n\|_2^2 + \|\theta'\|_2^2 \right),
\end{align}
for all \( t \in [0,T] \). 

Since \( \sup_{t \in [0,T]} \|\tilde \theta'_n(t)\|_2 \to 0 \) as \( n \to \infty \) by (\ref{ntd66'}), it follows that
\[
\sup_{t \in [0,T]} \int_0^{2\pi} |\partial_z \tilde{\bar{\theta}}_n|^2 \, dz \to 0 \quad \text{as } n \to \infty.
\]
This implies that
\[
\bar{\theta}_n \to \bar{\theta} \quad \text{in } L^{\infty}(0,T; \dot{H}^1(0,2\pi)).
\]

This completes the proof of the continuous dependence on initial data for strong solutions.

\vspace{0.1in}

\section{Vanishing Diffusivity Limit}

This section is devoted to proving Theorem \ref{thm-conv}, which concerns the vanishing diffusivity limit of the system (\ref{fR-1})--(\ref{fR-4}). Specifically, we consider the weak solution $(\theta'_{\varepsilon}, \bar{\theta}_{\varepsilon}, \mathbf{u}_{\varepsilon}, w_{\varepsilon})$ of system (\ref{fR-1})--(\ref{fR-4}) with initial data $\theta'_{\varepsilon}(0) \in L^2(\Omega)$, and the strong solution $(\theta', \bar{\theta}, \mathbf{u}, w)$ of system (\ref{iRR-1})--(\ref{iRR-4}) with initial data $\theta'_0 \in L^6(\Omega)$ and $\nabla_h \theta'_0 \in L^3(\Omega)$.

Theorem \ref{thm-conv} asserts that $(\theta'_{\varepsilon}, \bar{\theta}_{\varepsilon}, \mathbf{u}_{\varepsilon}, w_{\varepsilon})$ converges to $(\theta', \bar{\theta}, \mathbf{u}, w)$ in $L^{\infty}(0,T; L^2(\Omega) \times \dot{H}^1(0,2\pi) \times (H^2_h(\Omega))^3)$
as $\varepsilon \to 0$, provided that $\theta'_{\varepsilon}(0) \to \theta'_0$ in $L^2(\Omega)$.

Let us define the differences:
\[
\Theta'_{\varepsilon} = \theta'_{\varepsilon} - \theta', \;\; \mathbf{U}_{\varepsilon} = \mathbf{u}_{\varepsilon} - \mathbf{u}, \;\; W_{\varepsilon} = w_{\varepsilon} - w, \;\; \bar{\Theta}_{\varepsilon} = \bar{\theta}_{\varepsilon} - \bar{\theta},\;\;
\Psi_{\varepsilon} = \psi_{\varepsilon} - \psi, \;\; \Omega_{\varepsilon} = \omega_{\varepsilon} - \omega.
\]

Subtracting systems (\ref{iRR-1})--(\ref{iRR-4}) from (\ref{fR-1})--(\ref{fR-4}) yields:
\begin{align}
& \partial_z \Psi_{\varepsilon} = \Theta'_{\varepsilon} + \Delta_h W_{\varepsilon}, && \text{in } L^2(0,T; H^1_h(\Omega)), \label{df-1} \\
& -\partial_z W_{\varepsilon} = \Delta_h \Omega_{\varepsilon}, && \text{in } L^2(\Omega \times (0,T)), \label{df-2} \\
& \partial_t \Theta'_{\varepsilon} + \mathbf{U}_{\varepsilon} \cdot \nabla_h \theta' + \mathbf{u}_{\varepsilon} \cdot \nabla_h \Theta'_{\varepsilon} + W_{\varepsilon} \partial_z \bar{\theta} + w_{\varepsilon} \partial_z \bar{\Theta}_{\varepsilon} \notag \\
& \hspace{1cm} = \varepsilon^2 \Delta_h \Theta'_{\varepsilon} + \varepsilon^2 \Delta_h \theta', && \text{in } L^2(0,T; (H^1_h(\Omega))'), \label{df-3} \\
& \partial_z(\overline{\theta' W_{\varepsilon}}) + \partial_z(\overline{\Theta'_{\varepsilon} w_{\varepsilon}}) = \partial_{zz} \bar{\Theta}_{\varepsilon}, && \text{in } L^2(0,T; H^{-1}(0,2\pi)), \label{df-4}
\end{align}
with $\nabla_h \cdot \mathbf{u}_{\varepsilon} = 0$ and $\nabla_h \cdot \mathbf{U}_{\varepsilon} = 0$.

Taking the duality pairing of (\ref{df-3}) with $\Theta'_{\varepsilon}$ in $(H^1_h(\Omega))' \times H^1_h(\Omega)$ gives:
\begin{align*}
\frac{1}{2} \frac{d}{dt} \|\Theta'_{\varepsilon}\|_2^2 + \varepsilon^2 \|\nabla_h \Theta'_{\varepsilon}\|_2^2 
&+ \int_{\Omega} (\mathbf{U}_{\varepsilon} \cdot \nabla_h \theta') \Theta'_{\varepsilon} \, dx\,dy\,dz 
+ \int_{\Omega} W_{\varepsilon} \partial_z \bar{\theta} \, \Theta'_{\varepsilon} \, dx\,dy\,dz \\
&+ \int_{\Omega} w_{\varepsilon} \partial_z \bar{\Theta}_{\varepsilon} \, \Theta'_{\varepsilon} \, dx\,dy\,dz 
\leq \frac{\varepsilon^2}{2} \|\nabla_h \theta'\|_2^2 + \frac{\varepsilon^2}{2} \|\nabla_h \Theta'_{\varepsilon}\|_2^2.
\end{align*}

This leads to:
\begin{align} \label{con1}
\frac{d}{dt} \|\Theta'_{\varepsilon}\|_2^2 &+ \varepsilon^2 \|\nabla_h \Theta'_{\varepsilon}\|_2^2 
+ 2 \int_{\Omega} (\mathbf{U}_{\varepsilon} \cdot \nabla_h \theta') \Theta'_{\varepsilon} \, dx\,dy\,dz \notag \\
&+ 2 \int_{\Omega} W_{\varepsilon} \partial_z \bar{\theta} \, \Theta'_{\varepsilon} \, dx\,dy\,dz 
+ 2 \int_{\Omega} w_{\varepsilon} \partial_z \bar{\Theta}_{\varepsilon} \, \Theta'_{\varepsilon} \, dx\,dy\,dz 
\leq \varepsilon^2 \|\nabla_h \theta'\|_2^2.
\end{align}

We now estimate the three nonlinear terms in (\ref{con1}) as follows.

Similar to (\ref{ntd2'})--(\ref{ntd22}), we have
\begin{align} \label{con2}
\int_{\Omega} |(\mathbf{U}_{\varepsilon} \cdot \nabla_h \theta') \Theta'_{\varepsilon}| \, dx\,dy\,dz 
\leq C \|\Theta'_{\varepsilon}\|_2^2 \|\nabla_h \theta'\|_3.
\end{align}

Also, mimicking (\ref{ntd33}), we obtain
\begin{align} \label{con3}
\int_{\Omega} |W_{\varepsilon} \partial_z \bar{\theta} \, \Theta'_{\varepsilon}| \, dx\,dy\,dz 
\leq C \|\Theta'_{\varepsilon}\|_2^2 \left( \int_0^{2\pi} \left| \partial_z \bar{\theta} \right|^2 dz \right)^{1/2}.
\end{align}

Moreover, similar to (\ref{inu-8})--(\ref{ntd44}), we derive
\begin{align} \label{con4}
-\int_{\Omega} w_{\varepsilon} \partial_z \bar{\Theta}_{\varepsilon} \, \Theta'_{\varepsilon} \, dx\,dy\,dz 
\leq -2\pi^2 \int_0^{2\pi} \left| \partial_z \bar{\Theta}_{\varepsilon} \right|^2 dz 
+ C \|\theta'\|_2^2 \|\Theta'_{\varepsilon}\|_2^2.
\end{align}

Because of (\ref{con1})-(\ref{con4}), it follows that
\begin{align*}
&\frac{d}{dt} \|\Theta'_{\varepsilon}\|_2^2 + \varepsilon^2 \|\nabla_h \Theta'_{\varepsilon}\|_2^2
+ 4\pi^2 \int_0^{2\pi} \left| \partial_z \bar{\Theta}_{\varepsilon} \right|^2 dz  \notag\\
&\leq \varepsilon^2 \|\nabla_h \theta'\|_2^2 
+ C \|\Theta'_{\varepsilon}\|_2^2 \|\nabla_h \theta'\|_3 
+ C \|\Theta'_{\varepsilon}\|_2^2 \|\bar{\theta}_z\|_2 
+ C \|\theta'\|_2^2 \|\Theta'_{\varepsilon}\|_2^2 \\
&\leq \varepsilon^2 C_2 + C_1 \|\Theta'_{\varepsilon}\|_2^2, \quad \text{for } t \in [0,T],
\end{align*}
where \( C_1 \) and \( C_2 \) depend on \( T \), \( \|\theta'_0\|_{L^6} \), and \( \|\nabla_h \theta'_0\|_3 \).

Therefore,
\begin{align*}
\|\Theta'_{\varepsilon}(t)\|_2^2 
\leq e^{C_1 t} \|\Theta'_{\varepsilon}(0)\|_2^2 + \varepsilon^2 \frac{C_2}{C_1} (e^{C_1 t} - 1), \quad \text{for } t \in [0,T].
\end{align*}

Thus,
\begin{align} \label{con5}
\sup_{t \in [0,T]} \|\Theta'_{\varepsilon}(t)\|_2^2 
\leq C \left( \|\Theta'_{\varepsilon}(0)\|_2^2 + \varepsilon^2 \right),
\end{align}
where \( C \) depends on \( T \), \( \|\theta'_0\|_{L^6} \), and \( \|\nabla_h \theta'_0\|_3 \).

Moreover, using the linear equations (\ref{df-1})--(\ref{df-2}), we deduce from (\ref{con5}) that for every \( t \in [0,T] \),
\begin{align} \label{con7}
\|\Delta_h \mathbf{U}_{\varepsilon}(t)\|_2^2 + \|\Delta_h W_{\varepsilon}(t)\|_2^2 
\leq C \|\Theta'_{\varepsilon}(t)\|_2^2 
\leq C \left( \|\Theta'_{\varepsilon}(0)\|_2^2 + \varepsilon^2 \right).
\end{align}

Finally, we consider \( \bar{\Theta}_{\varepsilon} \). Using a similar estimate as in (\ref{ntd77})--(\ref{ntd88}), we obtain:
\begin{align*}
\int_0^{2\pi} \left| \partial_z \bar{\Theta}_{\varepsilon} \right|^2 dz 
\leq C \|\Theta'_{\varepsilon}\|_2^2 \left( \|\theta'_{\varepsilon}\|_2^2 + \|\theta'\|_2^2 \right) 
\leq C \left( \|\Theta'_{\varepsilon}(0)\|_2^2 + \varepsilon^2 \right),
\end{align*}
for any \( t \in [0,T] \), by virtue of (\ref{con5}), where \( C \) depends on \( T \), \( \|\theta'_0\|_{L^6} \), and \( \|\nabla_h \theta'_0\|_3 \).

This completes the proof of Theorem \ref{thm-conv}.

\section*{Acknowledgements}
The  work E.S.T. was supported in part by the DFG Research Unit FOR 5528 on Geophysical Flows. Moreover, E.S.T. have benefitted from the inspiring environment of the CRC 1114 ``Scaling Cascades in Complex Systems'', Project Number 235221301, Project C09, funded by the Deutsche Forschungsgemeinschaft (DFG).

\end{document}